\documentclass[]{interact}

\usepackage{epstopdf}% To incorporate .eps illustrations using PDFLaTeX, etc.
\usepackage[caption=false]{subfig}% Support for small, `sub' figures and tables
\usepackage[numbers,sort&compress]{natbib}% Citation support using natbib.sty
\bibpunct[, ]{[}{]}{,}{n}{,}{,}% Citation support using natbib.sty
\makeatletter% @ becomes a letter
\def\NAT@def@citea{\def\@citea{\NAT@separator}}% Suppress spaces between citations using natbib.sty
\makeatother% @ becomes a symbol again

\usepackage{amsthm}%������֤���໷��������
\usepackage[usenames]{color}
\usepackage{xspace,colortbl}
\usepackage{latexsym,url,amscd,amssymb}
\usepackage{amsmath}
\usepackage{hyperref}
\usepackage{cases}
\usepackage{caption}
\usepackage{algorithm}
\usepackage{algpseudocode}
\usepackage{listings}
\usepackage{rotating}
\usepackage{framed}
\usepackage{amssymb}
\usepackage{float}
\usepackage{amsmath}\numberwithin{equation}{section}
\usepackage{multirow}
\usepackage{makecell}
\usepackage{booktabs}
\usepackage[utf8]{inputenc}  % ??UTF-8??
\usepackage[T1]{fontenc}

\DeclareMathOperator*{\argmin}{arg\,min}

\DeclareMathOperator*{\dist}{dist}

\def\M{{\mathcal M}}

\newtheorem{lemma}{Lemma}[section]
\newtheorem{definition}{Definition}[section]

\newtheorem{theorem}{Theorem}[section]

\renewcommand{\thesection}{\arabic{section}}
\renewcommand{\thesubsection}{\arabic{section}.\arabic{subsection}}

\begin{document}

\title{Bilevel Programming Approach for Image Restoration Problems with Automatically Hyperparameter Selection}

\author{
\name{Hang Xie\textsuperscript{a}, Xuewen Li\textsuperscript{c}, Peili Li\textsuperscript{a,b}\thanks{CONTACT Peili Li. Email: lipeili@henu.edu.cn}, and Qiuyu Wang\textsuperscript{d}}
\affil{\textsuperscript{a}School of Mathematics and Statistics, Henan University, Kaifeng 475000, P.R. China. \\ \textsuperscript{b}Center for Applied Mathematics of Henan Province, Henan University, Zhengzhou 450046, P.R. China.\\
\textsuperscript{c}School of Nursing and Health, Henan University, Kaifeng 475000, P.R. China.\\
\textsuperscript{d}School of Software, Henan University, Kaifeng 475000, P.R. China.}
}

\maketitle

\begin{abstract}
In optimization-based image restoration models, the correct selection of hyperparameters is crucial for achieving superior performance. However, current research typically involves manual tuning of these hyperparameters, which is highly time-consuming and often lacks accuracy.
In this paper, we concentrate on the automated selection of hyperparameters in the context of image restoration and present a bilevel programming approach that can simultaneously select the optimal hyperparameters and achieve high-quality restoration results.
For implementation, we reformulate the bilevel programming problem that incorporates an inequality constraint related to the difference-of-convex functions. Following this, we address a sequence of nonsmooth convex programming problems by employing a feasibility penalty function along with a proximal point term.
In this context, the nonsmooth convex programming problem uses the solution of the lower-level problem, which is derived through the alternating direction method of multipliers.
Theoretically, we prove that the sequence generated by the algorithm converges to a Karush-Kuhn-Tucker stationary point of the inequality-constrained equivalent bilevel programming problem.
We conduct a series of tests on both simulations and real images, which demonstrate that the proposed algorithm achieve superior restoration quality while requiring less computing time compared to other hyperparameter selection methods.
\end{abstract}

\begin{keywords}
Bilevel optimization; image restoration; hyperparameter selection; difference-of-convex functions; alternating direction method of multipliers.
\end{keywords}

%%%%%%%%%%%%%%%%%%%%%%%%%%%%%%%%%%%%%%%%%%%%%%%%%%%%%%
\setcounter{equation}{0}
\section{Introduction}\label{section1}
%%%%%%%%%%%%%%%%%%%%%%%%%%%%%%%%%%%%%%%%%%%%%%%%%%%%%%
Restoring a high-quality image from degraded versions, such as those affected by noise, blur, or down-sampling, has various applications in fields like medical imaging, remote sensing,  surveillance, etc. The degradation is often due to a linear blur and an additive random noise. The degraded image can be described mathematically as:
\begin{equation}\label{orig}
	b=\Phi x+\epsilon,
\end{equation}
where $x\in\mathbb{R}^n$ denotes an original vectorized  image, $b\in\mathbb{R}^m$ is a degraded data, $\Phi:\mathbb{R}^n\to\mathbb{R}^m$ is a linear operator which maps the image  to a lower space with $m\ll n$,  and $\epsilon\in\mathbb{R}^m$ is an additive random  noise.
For instance, in the context of magnetic resonance imaging restoration, the linear operator $\Phi$ is defined as $\Phi := \mathcal{P} \mathcal{F}$, where $\mathcal{F}$ denotes the Fourier operator that maps the image space to $k$-space, and $\mathcal{P}$ represents an undersampling mask applied in the $k$-space.
The goal is to reconstruct $x$ from the given degraded data $b$.  However, this data acquisition process
is quite time consuming due to physiological and hardware constraints.

To cope with the ill-posed nature of image restoration, image prior knowledge is usually employed for penalizing the solution to the following minimization problem
$$
\min_{x\in\mathbb{R}^n} \ \mathbb{L}(\Phi x-b)+\lambda\mathbb{P}(x),
$$
where $\mathbb{L}(\cdot)$ is a data-fidelity term and $\mathbb{P}(\cdot)$ is a regularization term denoting image prior, and $\lambda$ is a key hyperparameter. Since prior knowledge about images plays a vital role in the effectiveness of image restoration algorithms, developing effective regularization terms that encapsulate this prior information is fundamental to successful image restoration.
One of the earlier work of Lustig et al. \cite{lustig2007sparse} modeled the regularization term as a linear combination of wavelet sparsity and total variation   regularization \cite{rudin1992nonlinear}.
More specifically, let $ \Psi \in \mathbb{R}^{d \times n} $ represent a Haar wavelet transform matrix. The total variation  and wavelet components are formulated as follows:
\begin{equation}\label{mripro}
	\min_{x \in \mathbb{R}^n} \left\{\frac{1}{2} \| \Phi x - b \|_2^2 + \lambda_1 \|\Psi x\|_1 + \lambda_2 \|x\|_{\text{TV}}\right\},
\end{equation}
where $\|\cdot\|_{\text{TV}}$ denotes a discretized version of total variation regularization, and $\lambda_1, \lambda_2 > 0$ are named hyperparameters.
It is essential to recognize that numerous optimization methods have been developed in the past several years and have   achieved great success in various image restoration applications. For example, the operator splitting method proposed by Ma et al. \cite{ma2008efficient}, the alternating direction method of multipliers (ADMM) employed by Yang et al. \cite{yang2010fast} and Ding et al. \cite{ding2023efficient}, the two-step fixed-point proximity algorithm presented by Li et al. \cite{li2017two}.
The algorithms reviewed are based on the assumption that the hyperparameters $\lambda_1$ and $\lambda_2$ are fixed; nevertheless, selecting suitable values poses substantial challenges in practical applications.

When selecting suitable hyperparameters, the traditional methods such as grid-search and random-search are poorly suited for sparse optimization problems due to their undirected approach. As a result, the limitations of these conventional approaches highlight the necessity of developing a bilevel programming framework, as demonstrated in \cite{kunisch2013bilevel}, which effectively captures the relationships between hyperparameters $\lambda_1,\lambda_2$ and other relevant variable $x$.
Within this framework, the degraded data $b$ (resp.  $\Phi$) are randomly partitioned into three distinct sets: the training set, validation set, and test set, denoted as $b_{\text{tr}}$ (resp. $\Phi_{\text{tr}}$), $b_{\text{val}}$ (resp. $\Phi_{\text{val}}$), and $b_{\text{ts}}$ (resp. $\Phi_{\text{ts}}$), respectively.
As a result, the bilevel programming formulation for selecting hyperparameters is expressed as follows:
\begin{equation}\label{blp}
	\begin{array}{rl}
		\min\limits_{x\in\mathbb{R}^n, \lambda\in\mathbb{R}^2_+} \ &\mathbb{F}(\lambda):=\dfrac{1}{2}\|\Phi_{\text{val}} \ x(\lambda)-b_{\text{val}}\|^{2}_{2} \\
		 \text{s.t.} \ & x(\lambda)\in \argmin\limits_{x \in\mathbb{R}^{n}}
		\Big\{g(x):=\dfrac{1}{2}\|\Phi_{\text{tr}} x-b_{\text{tr}}\|^{2}_{2}+\lambda_{1}\|\Psi x\|_{1}+\lambda_{2}\|x\|_{\text{TV}}\Big\},
	\end{array}
\end{equation}
where $\lambda=(\lambda_1,\lambda_2)$ and $\mathbb{F}(\lambda)$ is called upper objective  function. In this context, the lower-level problem concentrates on recovering the image based on the training data by utilizing fixed hyperparameters $\lambda_1$ and $\lambda_2$. Conversely, the upper-level problem seeks to assess the quality of the restoration image $x(\lambda)$ based on the validation data.
However, the bi-level programming problem \eqref{blp} is generally challenging to solve due to its hierarchical structure and the nested relationships inherent between the upper-level and lower-level problems.

Over the past few decades, numerous strategies have been developed to address bilevel optimization problems, including direct methods and iterative techniques, as detailed in Dempe's book \cite{dempe2020bilevel} and the references therein.
Due to the convexity of the lower-level problem, the bilevel optimization problem \eqref{blp} can be transformed into a single-level problem using its Karush-Kuhn-Tucker (KKT) conditions. However, the resulting single-level problem includes complementarity constraints, complicating the development of a convergence algorithm due to the violation of certain constraint qualifications.
In contrast, the gradient-based iterative methods alternate between solving the upper-level and lower-level problems until convergence is achieved.
For example, Franceschi et al. \cite{franceschi2018bilevel} proposed an iterative gradient descent approach with using hypergradient $\nabla\mathbb{F}(\lambda)$ and established convergence when the lower-level problem is smooth has a unique solution.
This method was improved by Shaban et al. \cite{shaban2019truncated}, who used an approximated solution $x(\lambda)$ to the lower-level problem, and by Liu et al. \cite{liu2018darts} and Ji et al. \cite{ji2021bilevel}, who applied an approximate hypergradient $\nabla\mathbb{F}(\lambda)$, and by Raghu et al. \cite{raghu2019rapid} and Ji et al. \cite{ji2020convergence}, who used a small subset of parameters in the lower-level loop.
In the context of nonsmooth lower-level functions $g(x)$, Feng and Simon \cite{feng2018gradient} examined a gradient descent algorithm that incorporates various commonly utilized penalty functions. Moreover, Bertrand et al. \cite{bertrand2022implicit} and Bertrand et al. \cite{bertrand2020implicit} investigated particular regularized lower-level objective functions, developing  an automatic differentiation method and an efficient implicit differentiation approach for computing the hypergradient $\nabla\mathbb{F}(\lambda)$. Additionally, Grazzi et al. \cite{grazzi2020iteration} established the iteration complexity for several strategies to compute the hypergradient, under the assumption that the lower-level problem is strongly convex.

It is crucial to point out that all the aforementioned gradient descent approaches generally rely on the assumption that the lower-level problem is strongly convex, which enables the efficient computation of the hypergradient $\nabla\mathbb{F}(\lambda)$ using various strategies.
This assumption is fully addressed with the value function-based difference-of-convex algorithm developed by Gao et al. \cite{gao2022value}.
In this method, the lower-level problem is reformulated using inequality constraints that involve difference-of-convex functions, and the resulting  programming problem that coincides with the original bilevel programming problem with respect to their global and local minima.
Subsequently, the main idea of this method \cite{gao2022value} was subsequently improved by Ye et al. \cite{ye2023difference}   to address the difference-of-convex (DC) bilevel programming problem. In this context, the upper-level objective function is formulated as $ \mathbb{F}(\lambda) = \mathbb{F}_1(\lambda) - \mathbb{F}_2(\lambda) $, where  $ \mathbb{F}_1(\lambda) $ and $ \mathbb{F}_2(\lambda) $ are convex functions.
In this paper, we employ the main idea of Gao et al. \cite{gao2022value} to deal with the lower-level problem in the bilevel programming problem \eqref{blp} to get a programming problem contains DC function constraints.
At the current point $(x^k, r^k)$, an efficient alternating direction method of multipliers (ADMM) is employed to solve the lower-level problem associated with the concave part of the constraints. This process yields a pause point $\bar x^{k}$ and a corresponding multiplier $\xi^k$.
Following this, we linearize the concave portion of the DC functions structure and employ a feasibility penalty function along with a proximal point term to create a nonsmooth convex single-level programming problem.
Finally, we solve the resulting convex programming problem to obtain the next point $(x^{k+1}, r^{k+1})$ and terminate the iterative process once a specified convergence criterion is met.
Theoretically, we prove that the sequence $\{(x^k,r^k)\}$ generated by the proposed algorithm convergences globally to the KKT point of the resulting equivalent problem.
We perform numerical experiments using real image data and find that our proposed algorithm for \eqref{mripro} achieves better quality in recovered images with less computing time compared to other popular methods.

The remaining parts of this paper are organized as follows.  In section \ref{sec2}, we construct the DC functions based penalty algorithm to solve the bilevel programming problem \eqref{blp} for selecting hyperparameters and restoring images.
In section \ref{sec3}, we provide theoretical analysis and show that the proposed algorithm converges to the KKT point of the equivalent inequality constrained optimization problem.
In section \ref{sec4}, we test the presented algorithm and do performance comparisons on a real data set by several numerical experiments.  Finally, we conclude our paper with some remarks in section \ref{sec5}.

%%%%%%%%%%%%%%%%%%%%%%%%%%%%%%%%%%%%%%%%%%%%%%%%%%%%%%%%%%%%%%%%%
\section{Algorithm's construction}\label{sec2}
\setcounter{equation}{0}
%%%%%%%%%%%%%%%%%%%%%%%%%%%%%%%%%%%%%%%%%%%%%%%%%%%%%%%%%%%%%%%%
In this section, we focus on developing an efficient DC functions based penalty algorithm to solve the bilevel programming problem \eqref{blp}.
At the first place, it has been shown in this literature that, solving the total variation and wavelet regularized image restoration problem \eqref{mripro} is equivalent to solving the following inequality constrained programming problem
\begin{equation}\label{mripro2}
	\min_{x \in \mathbb{R}^n} \left\{\frac{1}{2} \| \Phi x - b \|_2^2 \quad \text{s.t.} \ \|\Psi x\|_1 \leq r_1, \ \|x\|_{\text{TV}}\leq r_2\right\},
\end{equation}
where $r=(r_1,r_2)$ is a suitable positive vector which named hyperparameter here. However, just as selecting $\lambda\in\mathbb{R}^2_+$ in \eqref{mripro} poses challenges, finding an appropriate $r\in\mathbb{R}^2_+$ is also a difficult task.
Hence, similar to the bilevel programming problem \eqref{blp}, we  also jointly select the hyperparameter and restore the image, reformulating it as the following inequality-constrained bilevel programming problem
\begin{equation}\label{blp2}
	\begin{array}{rl}
		\min\limits_{x\in\mathbb{R}^n, r\in\mathbb{R}^2_+} \ & \mathbb{F}(r):=\dfrac{1}{2}\|\Phi_{\text{val}} \ x(r)-b_{\text{val}}\|^{2}_{2}   \\
		\text{s.t.} \                                        & x(r)\in \argmin\limits_{x \in\mathbb{R}^{n}}
		\Big\{\dfrac{1}{2}\|\Phi_{\text{tr}} x-b_{\text{tr}}\|^{2}_{2} \quad \text{s.t.} \ \|\Psi x\|_1 \leq r_1, \ \|x\|_{\text{TV}}\leq r_2\Big\}.
	\end{array}
\end{equation}
In contrast to \eqref{blp}, we see that the lower-level problem of \eqref{blp2} aims to find an optimal solution $x(r)$ for fixed values of $r_1$ and $r_2$, while the upper-level objective function assesses the quality of this solution.
Moreover, the lower-level problem admits a unique solution and that some of the gradient descent approaches reviewed before seemly can be employed. Nevertheless, addressing \eqref{blp2} may pose additional difficulties, especially when calculating the hypergradient $\nabla\mathbb{F}(r)$, owing to the existence of inequality constraints in the lower-level problem.

In order to solve \eqref{blp2}, we use the idea of Gao et al. \cite{gao2022value} that reformulate the lower-level problem in \eqref{blp2} as an inequality constraint contains an implicit minimization problem in the form of
\begin{equation}\label{blp3}
	\begin{array}{rl}
		\min\limits_{x\in\mathbb{R}^n, r\in\mathbb{R}^2_+} \ & \mathbb{F}(r):=\dfrac{1}{2}\|\Phi_{\text{val}} \ x-b_{\text{val}}\|^{2}_{2}           \\
		\text{s.t.} \                                        &
		 \dfrac{1}{2}\|\Phi_{\text{tr}} x-b_{\text{tr}}\|^{2}_{2}-h(r)\leq 0, \ \ \|\Psi x\|_1 \leq r_1, \ \ \|x\|_{\text{TV}}\leq r_2,
	\end{array}
\end{equation}
where
\begin{equation}\label{3}
	h(r) \triangleq \min_{x\in\mathbb{R}^n}\Big\{\dfrac{1}{2}\|\Phi_{\text{tr}} x-b_{\text{tr}}\|^{2}_{2} \quad \text{s.t.} \ \|\Psi x\|_1 \leq r_1, \ \|x\|_{\text{TV}}\leq r_2\Big\}.
\end{equation}
From the formulation, we can observe that the process initially involves solving the lower-level problem \eqref{3} with fixed values of $ r_1$ and $r_2$ using the training data to obtain an optimal solution $x(r) $. Subsequently, the quality of the solution  is evaluated using the validation set, which is reflected in the objective function of the upper-level problem while the constraints are inherently satisfied.

In order to solve the resulting bilevel programming problem \eqref{blp3}, we focus on an iterative approach to solve the lower-level problem \eqref{3} using current iterate $x^k$ and current values of $(r_1^k,r_2^k)$. For this purpose, we let $u:=\Phi_{\text{tr}}x-b_{\text{br}}\in\mathbb{R}^m$ and $v:=\Psi x\in\mathbb{R}^d$, then \eqref{3} can be rewritten equivalently as
\begin{equation}\label{3-1}
	h(r^k) \triangleq \min_{x\in\mathbb{R}^n,u\in\mathbb{R}^m,v\in\mathbb{R}^d}\bigg\{\dfrac{1}{2}\|u\|^{2}_{2} +\delta_{\mathbb{B}_1^{(r_1^k)}}(v) +\delta_{\mathbb{B}_{\text{TV}}^{(r_2^k)}}(x)\quad \text{s.t.} \ \Phi_{\text{tr}} x-u=b_{\text{tr}}, \ \Psi x-v=0 \bigg\},
\end{equation}
where the symbol $\delta_{\mathbb{B}}(\cdot)$ is an indicator function defined on a closed convex set $\mathbb{B}$, and $\mathbb{B}_p^{(q)}$ denotes an $\ell_p$-norm ball with radius $q$. The nonsmooth convex problem $ \eqref{3-1} $ can be addressed efficiently using the symmetric Gauss-Seidel based ADMM with $(u,v)$ is one group and $x$ is another. For further details, one may refer to Li et al. \cite{2019A}.
Let $\bar x^k$ be the optimal solution of \eqref{3-1} and   $\xi^k\in\mathbb{R}^2_+$ be the multiplier corresponding to the constraints in \eqref{3}. It is easy to show that $(\bar x^k,\xi^k)\in\mathcal{M}(r^k)$ with $\M(r)$ taking the following form
\begin{equation}\label{6}
	\begin{array}{lll}		
		\M (r)\triangleq \bigg\{(x, \xi) \in \mathbb{R}^n\times\mathbb{R}^{2}_{+} \ | \ 0 \in  \Phi_{\text{tr}}^\top(\Phi_{\text{tr}} x- b_{\text{tr}})+\xi_{1}\partial \|\Psi x \|_{1}+ \xi_{2} \sum^{n}\limits_{i=1}\partial \|D_{i}x\|_{1},\\
		\hspace{12.8em}  \xi_{1}\big( \|\Psi x \|_{1} - r_{1}\big)=0, \  \xi_{2}\big(\sum^{n}\limits_{i=1}\|D_{i}x\|_{1} - r_{2}\big)=0\bigg\},
	\end{array}
\end{equation}
where the notation $\sum^{n}_{i=1}\|D_{i}x\|_{1}$ is denoted as the total variation $\|x\|_{\text{tv}}$ with a difference operator $D_i$ for $i=1,\ldots,n$.

While the solution $\bar x^k$ to the lower-level problem \eqref{3} and the corresponding multiplier $\xi^k$ is computed, we turn to consider the following minimization problem based on feasibility penalty function
\begin{equation}\label{66}
	\begin{array}{rl}
		\min\limits_{x\in\mathbb{R}^n,r\in\mathbb{R}^{2}_{+}} \ \phi_{k}(x,r)\triangleq& \frac12\|\Phi_{\text{val}} x-b_{\text{val}}\|_2^2+\frac{\rho_k}{2}\|x-x^{k}\|_2^{2} +\frac{\rho_k}{2}\|r-r^{k}\|_2^{2}\\
		   & +\alpha_{k} \max \Big\{0, \ \theta_{k}(x,r), \ \|\Psi x\|_{1}- r_{1}, \ \sum^{n}\limits_{i=1}\|D_{i}x\|_{1}- r_{2} \Big\},
	\end{array}
\end{equation}
were  $\rho_k>0$ is a proximal parameter and $\alpha_k>0$ is a penalty parameter.
In practice, $\rho_k$ and $\alpha_k$ are adaptively changed to enhance numerical stability and constraints enforcement.
  In the context, $\theta_k(x,r)$ is defined as follows:
\begin{equation}\label{7}
	\theta_{k}{(x,r) \triangleq \dfrac{1}{2}\|\Phi_{\text{tr}} x-b_{\text{tr}}\|^{2}_{2} -\dfrac{1}{2}\|\Phi_{\text{tr}} \bar x^k-b_{\text{tr}}\|^{2}_{2}+\langle\xi^{k},r-r^{k}\rangle}.
\end{equation}
It is important to note that terms $\|x-x^k\|_2^2$ and $\|r-r^k\|_2^2$ in \eqref{6} are named proximal terms which are used to control the next iteration point is not far away from $(x^k,r^k)$.
Additionally, it is noted that $\theta_k(x,r)$, as defined in \eqref{7}, serves as an approximation expression for the first inequality constraint in \eqref{blp3}. This is because the term $h(r)$ has been linearized at the point $\bar x^k$.
Finally, it is also to note that
$$
\dfrac{1}{2}\|\Phi_{\text{tr}} \bar x^k-b_{\text{tr}}\|^{2}_{2}=h(r^k)
$$
which is used in the subsequent theoretical analysis.

To facilitate our discussion, we introduce the notation $z:=(x,r)\in\mathbb{R}^{n+1}$ and define $\Sigma :=\mathbb{R}^{n}\times \mathbb{R}^{2}_{+}$. Utilizing this notation, we observe that solving the problem as posed in \eqref{6} is equivalent to finding a point $z^{k+1}$ such that $0\in\partial\phi_k(z^{k+1})+\mathbb{N}_\Sigma(z^{k+1})$. For practical implementation, it is reasonable to seek an inexact solution $z^{k+1}$ that satisfies the following inequality:
\begin{equation}\label{9}
	\dist\Big(0,\partial \phi_{k}(z^{k+1})+\mathbb{N}_{\Sigma}(z^{k+1})\Big)\leq \frac{\sqrt{2}}{2}\rho_k \|z^{k}-z^{k-1}\|,
\end{equation}
where $\rho_k\geq 0$ is a positive scalar. In this context, $\dist(x,\Omega)$ denotes a distance from $x$ to a closed convex set $\Omega$, and
$\mathbb{N}_{\Sigma}(z)$ represents a normal cone on $\Sigma$ at $z\in\Sigma$.
Furthermore, \eqref{6} is a nonsmooth convex optimization problem and can be efficiently solved using existing subgradient methods, such as Nesterov's optimal method for max-type functions \cite{nesterov2014subgradient}. For brevity, we do not elaborate on the numerical solution of this problem, as it is beyond the scope of this paper. Here, we emphasize that problem \eqref{6} is solved iteratively, yielding a sequence $\{z^k\}$. The iterative process is terminated once the stopping criterion
$
\max\big\{\|z^{k+1}-z^k\|_2,\eta^{k+1}\big\}\leq \text{tol}
$
is satisfied, where $\text{tol}$ denotes a prescribed tolerance, and the quantity $\eta^{k+1}$ is defined as
\begin{equation}\label{etak}
\eta^{k+1}\triangleq \max\Big\{0, \ \theta_{k}(x^{k+1},r^{k+1}), \  \|\Psi^{*}x^{k+1}\|_{1}- r^{k+1}_{1}, \  \sum_{i=1}^{n}\|D_{i}x^{k+1}\|_{1}- r^{k+1}_{2} \Big\}.
\end{equation}
Taking all the above into account, we summarize the complete procedure of the proposed DC-based algorithm for the bilevel framework, referred to as BF-DCA, as follows:
\begin{algorithm}[H]
	\caption{BF-DCA}
	\label{alg:BF-DCA}
	\begin{algorithmic}[1]
		\Require Initial point $(x^{0}, r^{0}) \in \mathbb{R}^n \times \mathbb{R}^2_{+}$;
		parameters $c_{\rho}>0$, $c_{\alpha} > 0$, $\delta_{\alpha} > 0$, $\delta_{\rho}>0$, and $\tau>0$;
		initial parameters $\alpha_{\max}>\alpha_{0} > 0$ and $\rho_{\max}>\rho_0>0$;  small
		tolerance $\text{tol} > 0$.
		\Ensure An approximate solution $(x^{k}, r^{k})$.
		
		\For{$k = 0,1,2,\ldots$}
		\State Solve \eqref{3} by ADMM to obtain a solution $\bar{x}^{k}$ and the corresponding  multiplier $\xi^{k}$;
		
		\State Solve \eqref{6}  by a suitable subgradient method up to the tolerance specified in \eqref{9}, and obtain $(x^{k+1},r^{k+1})$;
		\State Set $z^{k+1} \gets (x^{k+1}, r^{k+1})$;
		
		\If{$\max\{\|z^{k+1}-z^{k}\|_2,\, \eta^{k+1}\} < \text{tol}$ with $\eta^{k+1}$ definded in \eqref{etak}}
		\State \textbf{break};
		\EndIf
		
		\State Set $\Delta^{k+1} \gets \|z^{k+1}-z^{k}\|_2$;
		
		\If{$\max\left\{\alpha_k,{1}/{\eta^{k+1}}\right\} < {c_{\alpha}}/{\Delta^{k+1}}$}
		\State $\alpha_{k+1} \gets \min\{\alpha_k + \delta_{\alpha},\alpha_{\max}\}$;
		\Else
		\State $\alpha_{k+1} \gets \alpha_k$;
		\EndIf
		\If{$ \Delta^{k+1}>c_{\rho}$}
		\State $\rho_{k+1} \gets \min\{\rho_k+\delta_\rho,\rho_{\max}\}$;
		\Else
		\State $\rho_{k+1} \gets \rho_k$;
		\EndIf
		\EndFor
	\end{algorithmic}
\end{algorithm}

%%%%%%%%%%%%%%%%%%%%%%%%%%%%%%%%%%%%%%%%%%%%%%%%%%%%%%%%%%%%%%%%%
\section{Theoretical analysis}\label{sec3}
\setcounter{equation}{0}
%%%%%%%%%%%%%%%%%%%%%%%%%%%%%%%%%%%%%%%%%%%%%%%%%%%%%%%%%%%%%%%%
In this section, we demonstrate the convergence results of the BF-DCA algorithm under certain specified conditions. To achieve this, we first establish the equivalence between problem \eqref{blp} and problem \eqref{blp2}. Following this, we show that the sequence $\{(x^k, r^k)\}$, generated by the BF-DCA algorithm, converges to a KKT point of problem \eqref{blp3}. Consequently, the convergence result of BF-DCA can be directly inferred from the established equivalence between problems \eqref{blp2} and \eqref{blp3}, as demonstrated by Gao et al. \cite{gao2022value}.

\subsection{Relationship between problem \eqref{blp} and problem  \eqref{blp2}}

At the beginning, we give the the definition of KKT point to problem  \eqref{blp3}.

\begin{definition}[KKT point]\label{kktp}
		A feasible point $(\bar x,\bar r)$ is called a KKT point
		of problem~\eqref{blp3} if there exist Lagrange multipliers
		$\xi_0 \ge 0$, $\xi_1 \ge 0$, and $\xi_2 \ge 0$
		such that the following conditions hold:
\begin{equation}\label{eq:KKT_compact}
	\left\{
	\begin{aligned}
		0 &\in \nabla_x \mathcal L(\bar x,\bar r,\xi_0,\xi_1,\xi_2), \\[0.6ex]
		0 &\in \nabla_r \mathcal L(\bar x,\bar r,\xi_0,\xi_1,\xi_2)
		+ \mathbb N_{\mathbb R^2_+}(\bar r), \\[0.6ex]
		0 &\le (\xi_0,\xi_1,\xi_2)\ \perp\
		\begin{pmatrix}
			\dfrac12\|\Phi_{\mathrm{tr}}\bar x-b_{\mathrm{tr}}\|_2^2-h(\bar r) \\[0.8ex]
			\|\Psi\bar x\|_1-\bar r_1 \\[0.6ex]
			\|\bar x\|_{\mathrm{TV}}-\bar r_2
		\end{pmatrix}
		\le 0 ,
	\end{aligned}
	\right.
\end{equation}
where
\begin{equation}\label{eq:Lagrangian}
	\begin{aligned}
		\mathcal L(x,r,\xi_0,\xi_1,\xi_2)
		= \ \frac12\|\Phi_{\mathrm{val}}x-b_{\mathrm{val}}\|_2^2
		+ \xi_0\!\left(\frac12\|\Phi_{\mathrm{tr}}x-b_{\mathrm{tr}}\|_2^2-h(r)\right)
		+ \xi_1(\|\Psi x\|_1-r_1)
		+ \xi_2(\|x\|_{\mathrm{TV}}-r_2).
	\end{aligned}
\end{equation}
\end{definition}

We now show that problem~\eqref{mripro} with hyperparameters
$\lambda_1$ and $\lambda_2$ is equivalent to problem \eqref{mripro2}
with hyperparameters $r_1$ and $r_2$, provided that these parameters
are chosen appropriately.
Let $\mathbb{S}_p(\lambda)$ denote the solution set of problem \eqref{mripro},
and let $\mathbb{S}_c(r)$ denote the solution set of problem \eqref{mripro2}. We now show that if $x \in \mathbb{S}_p(\lambda)$, then $x \in \mathbb{S}_c(r)$,
and vice versa.
This result has been known in the literature for many years, and its proof is straightforward.
For completeness, we provide the proof here.

\begin{lemma}\label{lem1}
	Suppose that $\bar x \in \mathbb{S}_p(\lambda)$ with $\lambda \in \mathbb{R}^2_+$,
	and let $r = (r_1,r_2) = (\|\Psi \bar x\|_1, \|\bar x\|_{\mathrm{TV}}) \in \mathbb{R}^2_+$.
	Then, we have $\bar x \in \mathbb{S}_c(r)$.
\end{lemma}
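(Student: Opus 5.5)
The plan is a direct contradiction argument by comparing objective values. No subdifferential calculus is needed. Fix $\lambda=(\lambda_1,\lambda_2)\in\mathbb{R}^2_+$ and $\bar x\in\mathbb{S}_p(\lambda)$, and set $r_1=\|\Psi\bar x\|_1$, $r_2=\|\bar x\|_{\mathrm{TV}}$.

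First, $\bar x$ is feasible for \eqref{mripro2} with this $r$, since both constraints hold with equality. It therefore remains to show that $\frac12\|\Phi\bar x-b\|_2^2\le\frac12\|\Phi x-b\|_2^2$ for every $x$ with $\|\Psi x\|_1\le r_1$ and $\|x\|_{\mathrm{TV}}\le r_2$.

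Suppose instead that some feasible $\tilde x$ satisfies $\frac12\|\Phi\tilde x-b\|_2^2<\frac12\|\Phi\bar x-b\|_2^2$. Because $\lambda_1,\lambda_2\ge0$, feasibility gives
\[
\lambda_1\|\Psi\tilde x\|_1+\lambda_2\|\tilde x\|_{\mathrm{TV}}\le\lambda_1 r_1+\lambda_2 r_2=\lambda_1\|\Psi\bar x\|_1+\lambda_2\|\bar x\|_{\mathrm{TV}}.
\]
Adding this to the strict inequality on the fidelity terms shows that the objective of \eqref{mripro} is strictly smaller at $\tilde x$ than at $\bar x$. This contradicts $\bar x\in\mathbb{S}_p(\lambda)$, so $\bar x\in\mathbb{S}_c(r)$.

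The only point needing care is the sign of $\lambda$. Nonnegativity of $\lambda_1,\lambda_2$ is exactly what lets the inequalities $\|\Psi\tilde x\|_1\le r_1$ and $\|\tilde x\|_{\mathrm{TV}}\le r_2$ be multiplied through without reversing. If some $\lambda_i=0$, the corresponding term simply drops out and the argument is unchanged. I expect no real obstacle: the statement is the easy direction of the penalized/constrained equivalence. The converse, which the paper mentions afterwards ("and vice versa"), would instead require Lagrange multipliers from a Slater-type condition for \eqref{mripro2}.
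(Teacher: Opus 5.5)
Your proof is correct, but it follows a different route from the paper's. The paper argues through first-order conditions. It writes the optimality condition \eqref{opt_condition} for $\bar x$ in \eqref{mripro}. It then notes that with $r=(\|\Psi\bar x\|_1,\|\bar x\|_{\mathrm{TV}})$ both constraints of \eqref{mripro2} are active, so complementary slackness holds trivially and $\bar x$ satisfies the KKT system \eqref{KKT_mripro2} with $\xi=\lambda$. From that it infers $\bar x\in\mathbb{S}_c(r)$. You instead compare objective values directly and obtain a contradiction with the minimality of $\bar x$ in \eqref{mripro}.

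Your version is more elementary and self-contained. It needs no subdifferential sum or chain rule. It also does not rely on the sufficiency of the KKT conditions for the convex problem \eqref{mripro2}, which the paper uses only implicitly in its final step. In fact your inequality chain, $\tfrac12\|\Phi\bar x-b\|_2^2\le \tfrac12\|\Phi\tilde x-b\|_2^2+\lambda_1(\|\Psi\tilde x\|_1-r_1)+\lambda_2(\|\tilde x\|_{\mathrm{TV}}-r_2)\le\tfrac12\|\Phi\tilde x-b\|_2^2$, is essentially the proof of that sufficiency, specialized to this instance. Because you use global minimality in \eqref{mripro} rather than stationarity, you need no convexity at all, only $\lambda_1,\lambda_2\ge 0$.

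What the paper's formulation buys is that it puts Lemma \ref{lem1} and its converse Lemma \ref{lem2} on the same footing. Both are read off from the single matching $\xi=\lambda$ between \eqref{opt_condition} and \eqref{KKT_mripro2}, which is the multiplier correspondence later exploited as $\bar\lambda=\bar\xi$ in Theorem \ref{globth}. You get this correspondence essentially for free as well: your hypothesis says $\bar x$ minimizes the Lagrangian $\frac12\|\Phi x-b\|_2^2+\lambda_1(\|\Psi x\|_1-r_1)+\lambda_2(\|x\|_{\mathrm{TV}}-r_2)$, and both constraints are active at $\bar x$.
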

\begin{proof}
Note that $\bar x$ is an optimal solution of problem~\eqref{mripro}.
Then, it satisfies
\begin{equation}\label{opt_condition}
	0 \in \Phi^\top (\Phi \bar x - b)
	+ \lambda_1 \, \Psi^\top \partial \|\Psi \bar x\|_1
	+ \lambda_2 \, \partial \|\bar x\|_{\mathrm{TV}}.
\end{equation}
On the other hand, the KKT system of problem~\eqref{mripro2} can be written as
\begin{equation}\label{KKT_mripro2}
	\left\{
	\begin{aligned}
		&0 \in \Phi^\top (\Phi x - b)
		+ \xi_1 \, \Psi^\top \partial \|\Psi x\|_1
		+ \xi_2 \, \partial \|x\|_{\mathrm{TV}},\\[0.5em]
		&\|\Psi x\|_1 \le r_1, \quad \|x\|_{\mathrm{TV}} \le r_2,\\[0.3em]
		&\xi_1 \ge 0, \quad \xi_2 \ge 0,\\[0.3em]
		&\xi_1 (\|\Psi x\|_1 - r_1) = 0, \quad
		\xi_2 (\|x\|_{\mathrm{TV}} - r_2) = 0.
	\end{aligned}
	\right.
\end{equation}
Hence, noting that
$
r = (r_1,r_2) = (\|\Psi \bar x\|_1, \|\bar x\|_{\mathrm{TV}}),
$
we see that $\bar x$ also satisfies the KKT system~\eqref{KKT_mripro2},
which implies that $\bar x \in \mathbb{S}_c(r)$.
\end{proof}

\begin{lemma}\label{lem2}
Suppose that $\bar x \in \mathbb{S}_c(r)$ with $r \in \mathbb{R}^2_+$,
and let $\lambda = (\lambda_1,\lambda_2)$ be the Lagrange multipliers of problem~\eqref{mripro2} at $\bar x$.
Then, it follows that $\bar x \in \mathbb{S}_p(\lambda)$.
\end{lemma}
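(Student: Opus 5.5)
The plan is to mirror the argument of Lemma~\ref{lem1}, running it in the opposite direction. The key fact is that both \eqref{mripro} and \eqref{mripro2} are convex problems. Consequently, first-order conditions are both necessary (under a constraint qualification) and sufficient for optimality.

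First, I would write down what it means for $\bar x \in \mathbb{S}_c(r)$ with $\lambda=(\lambda_1,\lambda_2)$ its Lagrange multipliers. By assumption, $(\bar x,\lambda_1,\lambda_2)$ satisfies the KKT system~\eqref{KKT_mripro2} with $\xi_i=\lambda_i$. The existence of such multipliers is part of the hypothesis. If one wants to justify it, Slater's condition holds whenever $r_1,r_2>0$, since $x=0$ is strictly feasible. In the degenerate case $r_i=0$, the constraints are polyhedral in a lifted form, so multipliers still exist. I will simply take them as given, as the statement does.

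Second, from the first line of \eqref{KKT_mripro2} I would extract
\begin{equation*}
0 \in \Phi^\top(\Phi\bar x-b)+\lambda_1\Psi^\top\partial\|\Psi\bar x\|_1+\lambda_2\,\partial\|\bar x\|_{\mathrm{TV}},
\end{equation*}
with $\lambda\in\mathbb{R}^2_+$ by dual feasibility. This is exactly the inclusion \eqref{opt_condition}.

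Third, I would observe that the objective of \eqref{mripro} with this $\lambda$ is convex, since $\lambda_i\ge 0$. Its subdifferential is the sum of the subdifferentials, because all terms are finite-valued convex functions, and the chain rule for $\|\Psi\cdot\|_1$ holds since the norm is finite everywhere. Hence \eqref{opt_condition} says $0\in\partial\big(\tfrac12\|\Phi\cdot-b\|_2^2+\lambda_1\|\Psi\cdot\|_1+\lambda_2\|\cdot\|_{\mathrm{TV}}\big)(\bar x)$. By Fermat's rule for convex functions, $\bar x$ is then a global minimizer, i.e.\ $\bar x\in\mathbb{S}_p(\lambda)$.

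The only step needing care is the third: justifying that the subdifferential sum rule holds with equality, so that the KKT inclusion really is the subdifferential of the penalized objective. This follows from finiteness and continuity of all terms. The complementarity conditions play no role in this direction.
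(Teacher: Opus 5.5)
Your proposal is correct and follows the same route as the paper: the stationarity line of the KKT system \eqref{KKT_mripro2} with $\xi=\lambda\ge 0$ is exactly the optimality condition \eqref{opt_condition}, and this condition gives $\bar x\in\mathbb{S}_p(\lambda)$. You also make explicit what the paper leaves implicit: the subdifferential sum and chain rules hold with equality for these finite convex terms, so \eqref{opt_condition} reads $0\in\partial\big(\tfrac12\|\Phi\cdot-b\|_2^2+\lambda_1\|\Psi\cdot\|_1+\lambda_2\|\cdot\|_{\mathrm{TV}}\big)(\bar x)$, and convexity makes this sufficient for global optimality.
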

\begin{proof}
	Note that $\bar x \in \mathbb{S}_c(r)$.
	Then, the KKT system~\eqref{KKT_mripro2} is satisfied at $\bar x$.
	By setting $\lambda_1 = \xi_1$ and $\lambda_2 = \xi_2$,
	the optimality condition~\eqref{opt_condition} is also satisfied,
	which implies that $\bar x \in \mathbb{S}_p(\lambda)$.
\end{proof}

The coincidence of the solution sets of problems~\eqref{mripro} and~\eqref{mripro2} implies the coincidence of the solution sets of the lower-level problems in \eqref{blp} and \eqref{blp2}.
However, this alone is not sufficient to guarantee that the global solutions of both problems coincide.
We now give a lemma which is important to following developments.

\begin{lemma}\label{lem3}
	Assume that $\mathcal{M}(\bar r)=\{(\bar x,\bar\xi)\}$ is a singleton.
	Let $(x,r)$ be a feasible solution of problem~\eqref{blp2}, and let
	$\mathbb{D}\subset \mathbb{R}^n\times\mathbb{R}^2_+$ be a dense subset.
	Suppose that there exist a bounded set $\Lambda$ and a sufficiently small scalar
	$\varepsilon>0$ such that, for any
	$(x,r)\in \mathbb{D}\cap \mathbb{B}_{\varepsilon}(\bar x,\bar r)$, one has
	$
	\Lambda \cap \mathcal{M}(r)\neq \varnothing.
	$
	Then there exists a sufficiently small constant $\varepsilon_1>0$ with
	$0<\varepsilon_1<\varepsilon$ such that
	$$
	\Lambda \cap \mathcal{M}(r)\subseteq
	\mathbb{B}_{\varepsilon_1}(\bar x,\bar\xi)
	$$
	for all $(x,r)\in \mathbb{B}_{\varepsilon}(\bar x,\bar r)$.
\end{lemma}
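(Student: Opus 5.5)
This is an upper-semicontinuity statement for the KKT-pair map $\mathcal{M}$ at $\bar r$. The plan is to combine the closedness of the graph of $\mathcal{M}$, the boundedness of $\Lambda$, and the singleton assumption $\mathcal{M}(\bar r)=\{(\bar x,\bar\xi)\}$, and to argue by contradiction. We read the conclusion as: the selections in $\Lambda\cap\mathcal{M}(r)$ stay close to $(\bar x,\bar\xi)$ once $r$ is close enough to $\bar r$. If necessary we shrink the radius of the ball in $r$, which the phrase ``sufficiently small $\varepsilon$'' allows, and we choose $\varepsilon_1<\varepsilon$ afterwards.

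\textbf{Step 1 (closed graph).} Show that $\operatorname{gph}\mathcal{M}=\{(r,x,\xi): (x,\xi)\in\mathcal{M}(r)\}$ is closed. Take $(r^j,x^j,\xi^j)\to(r,x,\xi)$ with $(x^j,\xi^j)\in\mathcal{M}(r^j)$. The stationarity inclusion in \eqref{6} reads $-\Phi_{\text{tr}}^\top(\Phi_{\text{tr}}x^j-b_{\text{tr}})\in \xi^j_1\partial\|\Psi\cdot\|_1(x^j)+\xi^j_2\partial\|\cdot\|_{\text{TV}}(x^j)$. Passing to the limit uses two facts. First, the subdifferential of a finite convex function is outer semicontinuous and locally bounded. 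Second, multiplying by the convergent scalars $\xi^j_i\ge 0$ keeps the limits inside $\xi_i\partial(\cdot)(x)$. The complementarity equations $\xi_i(\cdot-r_i)=0$ pass to the limit by continuity of the norms, and $\xi\in\mathbb{R}^2_+$ is preserved.

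\textbf{Step 2 (contradiction).} Suppose that for every $\varepsilon_1\in(0,\varepsilon)$ the inclusion fails. Then there are $r^j\to\bar r$ and $(x^j,\xi^j)\in\Lambda\cap\mathcal{M}(r^j)$ with $\|(x^j,\xi^j)-(\bar x,\bar\xi)\|\ge \varepsilon_1$ for a fixed small $\varepsilon_1$. The density hypothesis on $\mathbb{D}$ and the nonemptiness of $\Lambda\cap\mathcal{M}(r)$ are used only to guarantee that such sequences exist and that the statement is not vacuous. Since $\Lambda$ is bounded, we extract a convergent subsequence $(x^j,\xi^j)\to(\tilde x,\tilde\xi)\in\operatorname{cl}\Lambda$. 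Step 1 gives $(\tilde x,\tilde\xi)\in\mathcal{M}(\bar r)=\{(\bar x,\bar\xi)\}$. This contradicts $\|(\tilde x,\tilde\xi)-(\bar x,\bar\xi)\|\ge\varepsilon_1$.

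\textbf{Main obstacle.} The delicate point is the order of quantifiers. We must show that for each $\varepsilon_1$ there is a neighbourhood of $\bar r$ that works, and that $\varepsilon$ can be taken small relative to $\varepsilon_1$. The plan is to state the contradiction argument precisely in the form $\forall\varepsilon_1\,\exists\varepsilon'\le\varepsilon$ and then rename $\varepsilon'$ as $\varepsilon$. Step 1 itself requires some care only for the TV term, where we use $\partial\|x\|_{\text{TV}}=\sum_i D_i^\top\partial\|\cdot\|_1(D_ix)$ to obtain local boundedness and outer semicontinuity.
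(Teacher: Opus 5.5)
Your argument follows the same route as the paper's proof: extract a convergent subsequence from the bounded set $\Lambda$, pass to the limit in the KKT system to land in $\mathcal{M}(\bar r)$, and contradict the singleton assumption. Your execution is tighter than the paper's in several places. The paper uses closedness of the graph of $\mathcal{M}$ without justification, and your Step 1 supplies it. The paper puts the limit in $\Lambda$ instead of its closure. It deduces $\|\xi^k-\bar\xi\|>\varepsilon_1$ from $(\bar x^k,\xi^k)\notin\mathbb{B}_{\varepsilon_1}(\bar x,\bar\xi)$, which does not follow; your use of the joint distance avoids this. It also asserts that a ball lies inside the dense set $\mathbb{D}$. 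In fact neither the density of $\mathbb{D}$ nor the nonemptiness of $\Lambda\cap\mathcal{M}(r)$ is needed. The offending sequence comes from negating the conclusion, and an empty intersection satisfies the inclusion trivially.

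The issue you must not gloss over is the one you raise at the end. What the contradiction argument establishes is upper semicontinuity: for every $\varepsilon_1>0$ there is $\varepsilon'\in(0,\varepsilon]$ such that $\Lambda\cap\mathcal{M}(r)\subseteq\mathbb{B}_{\varepsilon_1}(\bar x,\bar\xi)$ whenever $\|r-\bar r\|<\varepsilon'$. Here $\varepsilon'$ depends on $\varepsilon_1$ and is typically much smaller than it. Renaming $\varepsilon'$ as $\varepsilon$ therefore gives no guarantee that $\varepsilon_1<\varepsilon$. So the promise in your first paragraph to ``choose $\varepsilon_1<\varepsilon$ afterwards'' cannot be kept. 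Likewise, the first sentence of Step 2 is not a valid negation of the literal statement. The points $r$ witnessing failure for each $\varepsilon_1\in(0,\varepsilon)$ need not approach $\bar r$. This is exactly the hidden flaw in the paper's proof, which simply posits a violating sequence $r^k\to\bar r$.

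No compactness argument can give $\varepsilon_1<\varepsilon$. It would require all KKT pairs over the whole $\varepsilon$-ball to stay within distance less than $\varepsilon$ of $(\bar x,\bar\xi)$, a quantitative calmness-type property (modulus below one) that the hypotheses do not imply. It also fails in general. Replacing $(\Phi_{\text{tr}},b_{\text{tr}})$ by $(s\Phi_{\text{tr}},s\,b_{\text{tr}})$ leaves the lower-level solutions unchanged, multiplies every multiplier by $s^2$, and preserves the singleton assumption. Suppose some multipliers satisfy $\|\xi(r)-\bar\xi\|\ge c\|r-\bar r\|$ along a ray from $\bar r$ with $c>0$. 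Then for $s^2c>1$ and $\Lambda$ a large ball, every $\varepsilon_1<\varepsilon$ has some $r$ with $\|r-\bar r\|<\varepsilon$ whose scaled pair lies outside $\mathbb{B}_{\varepsilon_1}(\bar x,s^2\bar\xi)$.

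So state explicitly that you are proving the upper-semicontinuity version, which is also all the paper's argument yields, and drop the requirement $\varepsilon_1<\varepsilon$.
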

\begin{proof}
	Suppose that $\varepsilon_1>0$ is chosen sufficiently small such that
	$\mathbb{B}_{\varepsilon_1}(\bar x,\bar r)
	\subset \mathbb{D}\cap \mathbb{B}_{\varepsilon}(\bar x,\bar r)$.
	Then there exists a sequence
	$\{(x^k,r^k)\}\subset \mathbb{B}_{\varepsilon_1}(\bar x,\bar r)$ such that
	$(x^k,r^k)\to(\bar x, \bar r)\quad \text{as } k\to\infty$,
	and the corresponding multipliers $\xi^k$ satisfy
	$(\bar x^k,\xi^k)\in \Lambda\cap \mathcal{M}(r^k)$.
	Assume, for the sake of contradiction, that
	$(\bar x^k,\xi^k)\notin \mathbb{B}_{\varepsilon_1}(\bar x,\bar\xi)$,
	which implies that
	$\|\xi^k-\bar\xi\|>\varepsilon_1$ for all  $k$.
		
	Note that $\Lambda$ is a bounded set and that $(\bar x^k,\xi^k)\in \Lambda$.
	It follows that the sequence $\{\xi^k\}$ is also bounded, so there exists a subsequence
	$\{(\bar x^{k_j}, \xi^{k_j})\}$ converging to some $(\bar x^*,\xi^*)\in \Lambda$.
	Since $\|\xi^k-\bar\xi\|>\varepsilon_1$ for all $k$, we have $\|\xi^{k_j}-\bar\xi\|>\varepsilon_1$ and thus
	$\|\xi^*-\bar\xi\|> \varepsilon_1$.	
	On the other hand, because $(\bar x^k,\xi^k)\in \Lambda\cap \mathcal{M}(r^k)$
	and $(x^k,r^k)\to(\bar x,\bar r)$, it follows that
	$(\bar x^{k_j},\xi^{k_j})\in \Lambda\cap \mathcal{M}(r^{k_j})$
	and $(\bar x^{k_j},r^{k_j})\to(\bar x^*,\bar r)$.
	Since $\xi^{k_j}\to\xi^*$, we deduce that
	$(\bar x^*,\xi^*)\in \mathcal{M}(\bar r)$.
	Recalling that $\mathcal{M}(\bar r)=\{(\bar x,\bar\xi)\}$ is a singleton,
	we must have $\bar x^*=\bar x$ and $\xi^*=\bar\xi$, which contradicts
	$\|\xi^k-\bar\xi\|> \varepsilon_1$ for all $k$.	
	Hence, the assumption is wrong, which completes the proof.
\end{proof}

We now prove the coincidence of the global solution to problems   \eqref{blp} and \eqref{blp2}. For convenience, we denote
$$
\mathbb{Q}(x)\triangleq \Big(\|\Psi x\|_1 , \|x\|_{\text{TV}}\Big) \in\mathbb{R}^2_+.
$$

\begin{theorem}\label{globth}
	Let $(\bar x,\bar r)\in\mathbb{R}^{n}\times\mathbb{R}^2_+$ be a global optimal solution of the bilevel problem \eqref{blp2} and $(\bar x,\bar\xi)\in\mathcal{M}(\bar r)$, then $(\bar x,\bar \lambda)$ is a also a global optimal solution of \eqref{blp} with $\bar\lambda=\bar\xi$.
	On the contrary, suppose that there there exists a dense subset $\mathbb{D}\subset\mathbb{R}^n\times\mathbb{R}^2_+$ such that $\mathcal{M}(r)\neq\varnothing$ for all $(x,r)\in\mathbb{D}$. Let $(\bar x,\bar \lambda)$ be a global optimal solution to problem  \eqref{blp} and let $\bar r=\mathbb{Q}(\bar x)$. Then $(\bar x,\bar r)$ is a also a global optimal solution of problem \eqref{blp2}.
\end{theorem}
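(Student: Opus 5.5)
The proof splits into the two directions of the statement. In both, the transfer between the lower-level problems of \eqref{blp2} and \eqref{blp} is done with Lemma \ref{lem1} and Lemma \ref{lem2}. Since the upper-level objective is the same function $\frac12\|\Phi_{\text{val}}x-b_{\text{val}}\|_2^2$ of $x$ in both problems, global optimality reduces to comparing the sets of attainable lower-level solutions. In each direction we argue by contradiction: a strictly better feasible point of one problem is mapped to a feasible point of the other with the same upper-level value.

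\emph{First direction.} Let $(\bar x,\bar r)$ be globally optimal for \eqref{blp2} and $(\bar x,\bar\xi)\in\mathcal{M}(\bar r)$.
\begin{itemize}
\item Since $\bar x\in\mathbb{S}_c(\bar r)$ and $\bar\xi$ is its multiplier, Lemma \ref{lem2} gives $\bar x\in\mathbb{S}_p(\bar\xi)$. Hence $(\bar x,\bar\lambda)$ with $\bar\lambda=\bar\xi$ is feasible for \eqref{blp}.
\item Suppose some feasible $(x,\lambda)$ of \eqref{blp}, with $x\in\mathbb{S}_p(\lambda)$, has a strictly smaller upper-level value.
\item Set $r=\mathbb{Q}(x)$. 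Lemma \ref{lem1} gives $x\in\mathbb{S}_c(r)$, so $(x,r)$ is feasible for \eqref{blp2} with the same upper value. This contradicts the optimality of $(\bar x,\bar r)$.
\end{itemize}
This direction is short and needs no density assumption.

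\emph{Converse direction.} Let $(\bar x,\bar\lambda)$ be globally optimal for \eqref{blp} and $\bar r=\mathbb{Q}(\bar x)$.
\begin{itemize}
\item Lemma \ref{lem1} gives $\bar x\in\mathbb{S}_c(\bar r)$, so $(\bar x,\bar r)$ is feasible for \eqref{blp2}.
\item Suppose a feasible $(x,r)$ of \eqref{blp2} has a strictly smaller upper value.
\item To go back via Lemma \ref{lem2}, we need a multiplier for the lower-level problem at $r$, i.e.\ $\mathcal{M}(r)\neq\varnothing$. The density hypothesis only guarantees this on the dense set $\mathbb{D}$.
\item The plan is to perturb. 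By density, pick $(x^j,r^j)\in\mathbb{D}$ with $(x^j,r^j)\to(x,r)$. Let $\tilde x^j$ be the (unique) lower-level solution at $r^j$, with multiplier $\xi^j$, so $(\tilde x^j,\xi^j)\in\mathcal{M}(r^j)$.
\item Lemma \ref{lem2} gives $\tilde x^j\in\mathbb{S}_p(\xi^j)$, so $(\tilde x^j,\xi^j)$ is feasible for \eqref{blp}.
\item Next we argue $\tilde x^j\to x$. The lower-level objective is continuous and strongly convex in the data-fitting direction. The constraint set $\{\|\Psi x\|_1\le r_1,\|x\|_{\text{TV}}\le r_2\}$ depends continuously (in the Hausdorff sense) on $r$ when $r>0$. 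Standard stability of convex programs then gives convergence of the solutions. Lemma \ref{lem3} can be invoked here in the same spirit to localize the multiplier–solution pairs near the limit.
\item By continuity, the upper value at $\tilde x^j$ is eventually strictly below that of $\bar x$, contradicting the global optimality of $(\bar x,\bar\lambda)$.
\end{itemize}

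The main obstacle is this perturbation step: ensuring the lower-level solutions at nearby $r^j$ converge to $x$. Two issues arise here. First, uniqueness of the lower-level solution is only claimed in the paper. Second, the boundary cases $r_i=0$ may break continuity. I would first try the argument under the paper's standing uniqueness assertion. I would treat $r$ with a zero component separately: there the constraint forces $\Psi x=0$ or $\|x\|_{\text{TV}}=0$, and a multiplier exists anyway because the constraints are polyhedral, so Lemma \ref{lem2} applies directly without perturbation.
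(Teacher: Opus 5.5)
Your first direction is correct and matches the paper's argument: Lemma~\ref{lem2} gives feasibility of $(\bar x,\bar\xi)$, and Lemma~\ref{lem1} with $r=\mathbb{Q}(x)$ transports any competitor. The converse has a genuine gap at the step you flag, the claim $\tilde x^j\to x$. Write $f(x):=\frac12\|\Phi_{\text{val}}x-b_{\text{val}}\|_2^2$. The lower-level solution is not unique in general. Since $\Phi_{\text{tr}}$ has a nontrivial kernel ($m\ll n$), the lower-level objective is strongly convex only in $\Phi_{\text{tr}}x$. So $\mathbb{S}_c(r)$ can be a polyhedron of positive dimension, for instance whenever a least-squares solution lies in the interior of the constraint set. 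Nothing ties $\Phi_{\text{val}}$ to $\Phi_{\text{tr}}$, so $f$ need not be constant on $\mathbb{S}_c(r)$. General (Berge-type) stability of convex programs gives only outer semicontinuity: cluster points of an arbitrary selection $\tilde x^j\in\mathbb{S}_c(r^j)$ lie in $\mathbb{S}_c(r)$, but they need not equal your competitor $x$. Hence $f(x)<f(\bar x)$ need not survive the limit. Lemma~\ref{lem3} cannot supply this either, because its hypothesis that $\mathcal{M}(\bar r)$ is a singleton is exactly the uniqueness you lack. Running the argument ``under the paper's uniqueness assertion'' therefore rests it on a premise that is false for this model. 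The paper's own proof takes the same density route and glosses over the same step: it treats the $x$-components of the approximating points of $\mathbb{D}$ as lower-level solutions at $r^k$. You were right to single this step out, but flagging it does not close it.

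The repair is already in your last sentence, and it removes the perturbation altogether. If $r_1,r_2>0$, then $x=0$ is a Slater point of \eqref{3}, so a Kuhn--Tucker vector exists. If some $r_i=0$, your polyhedrality argument applies. In fact it works for every $r\in\mathbb{R}^2_+$, since $\|\Psi x\|_1\le r_1$ is the finite linear system $s^\top\Psi x\le r_1$, $s\in\{\pm1\}^d$, and likewise for $\sum_i\|D_ix\|_1\le r_2$. In a convex program, a Kuhn--Tucker vector satisfies the KKT conditions together with \emph{every} optimal solution (Rockafellar, Convex Analysis, Thm.~28.3). So for your competitor $x\in\mathbb{S}_c(r)$ itself there is $\xi\in\mathbb{R}^2_+$ with $(x,\xi)\in\mathcal{M}(r)$. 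Lemma~\ref{lem2} then gives $x\in\mathbb{S}_p(\xi)$, so $(x,\xi)$ is feasible for \eqref{blp} with $f(x)<f(\bar x)$, contradicting the global optimality of $(\bar x,\bar\lambda)$. This makes the converse as short as the first direction and shows the density hypothesis is not needed. If you prefer to keep the perturbation, you must \emph{choose} $\tilde x^j$ as the point of $\mathbb{S}_c(r^j)$ nearest to $x$ and prove $\mathrm{dist}(x,\mathbb{S}_c(r^j))\to 0$. One way is Hoffman's error bound applied to $\mathbb{S}_c(r)=\{x:\Phi_{\text{tr}}x=y(r),\ \|\Psi x\|_1\le r_1,\ \|x\|_{\text{TV}}\le r_2\}$, written as a linear system. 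Here $y(r)$ is the common value of $\Phi_{\text{tr}}x$ over $\mathbb{S}_c(r)$, which depends continuously on $r$.
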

\begin{proof}
	Using the notations of $\mathbb{S}_p(\lambda)$ and $\mathbb{S}_c(r)$, we can restate the optimization problems \eqref{blp} and \eqref{blp2} in an equivalent form as follows:
	\begin{equation}\label{blp-1}
			\min\limits_{x\in\mathbb{R}^n, \lambda\in\mathbb{R}^2_+} \ \Big\{ \dfrac{1}{2}\|\Phi_{\text{val}} \ x-b_{\text{val}}\|^{2}_{2} \ | \  x\in  \mathbb{S}_p(\lambda)\Big\},
	\end{equation}
    and
	\begin{equation}\label{blp2-1}
		\min\limits_{x\in\mathbb{R}^n, r\in\mathbb{R}^2_+} \ \Big\{ \dfrac{1}{2}\|\Phi_{\text{val}} \ x-b_{\text{val}}\|^{2}_{2} \ | \  x\in  \mathbb{S}_c(r)\Big\}.
	\end{equation}
	
	 Since $(\bar{x}, \bar{r}) \in \mathbb{R}^n \times \mathbb{R}^2_+$ is a global optimal solution to problem \eqref{blp2-1}, it follows that $\bar{x} \in \mathbb{S}_c(\bar{r})$. Additionally, because $(\bar{x}, \bar{\xi}) \in \mathcal{M}(\bar{r})$, we conclude that $\bar{x} \in \mathbb{S}_p(\bar{\xi})$ from Lemma \ref{lem2}. This implies that $\bar{x} \in \mathbb{S}_p(\bar{\lambda})$ with $\bar{\lambda} := \bar{\xi}$, that is to say $\bar{x}$ is a feasible solution for problem \eqref{blp-1}. Furthermore, suppose that $(x,\lambda)$ with $\lambda\in\mathbb{R}^2_+$ is a feasible point of \eqref{blp-1}, i.e., $x\in\mathbb{S}_p(\lambda)$, then from Lemma \ref{lem1}, it gets that $x\in\mathbb{S}_c(r)$ if  choosing $r:=\mathbb{Q}(x)$, i.e., $(x,r)$ is a feasible solution of \eqref{blp2-1}.
	 Recall that $(\bar x,\bar r)$ is a global optimal solution of \eqref{blp2-1}, it gets that $\|\Phi_{\text{val}} \ \bar x-b_{\text{val}}\|^{2}_{2}\leq \|\Phi_{\text{val}} \ x-b_{\text{val}}\|^{2}_{2}$, which shows $\bar x$ is a global optimal solution of problem \eqref{blp-1}.
	
	Since $(\bar{x}, \bar{\lambda}) \in \mathbb{R}^n \times \mathbb{R}^2_+$ is a global optimal solution to problem \eqref{blp-1}, it follows that $\bar{x} \in \mathbb{S}_p(\bar{\lambda})$. From Lemma \ref{lem1}, we know that $\bar x\in\mathbb{S}_c(\bar r)$ if choosing $\bar r=\mathbb{Q}(\bar x)$, which shows $(\bar x,\bar r)$ is a feasible solution of problem \eqref{blp2-1}. Suppose that $(x,r)\in\mathbb{D}$ is a feasible point of problem \eqref{blp2-1}, then there a sequence $\{(\bar x^k,r^k)\}\subseteq\mathbb{D}$ such that $\{(\bar x^k,r^k)\}\to(x,r)$. Since $\mathcal{M}(r^k)\neq\varnothing$ for  $(\bar x^k,r^k)\in\mathbb{D}$, let $(\bar x^k,\lambda^k)\in\mathcal{M}(r^k)$, i.e., $0\in \Phi_{\text{tr}}^\top (\Phi_{\text{tr}} \bar x^k - b_{\text{tr}})
	+ \lambda_1^k \, \Psi^\top \partial \|\Psi \bar x^k\|_1
	+ \lambda_2^k \, \partial \|\bar x^k\|_{\mathrm{TV}}$, which means $\bar x^k\in\mathbb{S}_p(\lambda^k)$, i.e., $\bar x^k$ is a feasible solution of \eqref{blp-1}.
	Since $(\bar{x}, \bar{\lambda})$ is an optimal solution to problem \eqref{blp-1}, it follows that $\|\Phi_{\text{val}} \ \bar x-b_{\text{val}}\|^{2}_{2}\leq \|\Phi_{\text{val}} \ \bar x^k-b_{\text{val}}\|^{2}_{2}$. Let $k\to\infty$, it gets that $\|\Phi_{\text{val}} \ \bar x-b_{\text{val}}\|^{2}_{2}\leq \|\Phi_{\text{val}} \ x-b_{\text{val}}\|^{2}_{2}$, which shows $\bar x$ is a global optimal solution of problem \eqref{blp2-1}.

\end{proof}

We now prove that the local solution to problems   \eqref{blp} and \eqref{blp2} are also coincidence.

\begin{theorem}\label{localth}
	Let $(\bar x,\bar r)\in\mathbb{R}^{n}\times\mathbb{R}^2_+$ with $\bar r=\mathbb{Q}(\bar x)$ be a local optimal solution of the bilevel problem \eqref{blp2} and $(\bar x,\bar\xi)\in\mathcal{M}(\bar r)$, then $(\bar x,\bar \lambda)$ is a also a local optimal solution of \eqref{blp} with $\bar\lambda:=\bar\xi$.
\end{theorem}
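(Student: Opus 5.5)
We argue by contradiction, mirroring the first half of the proof of Theorem \ref{globth} but working in a neighbourhood. Throughout we use the reformulations \eqref{blp-1} and \eqref{blp2-1}.

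\textbf{Step 1 (feasibility).} Since $(\bar x,\bar r)$ is feasible for \eqref{blp2-1}, we have $\bar x\in\mathbb{S}_c(\bar r)$. Because $(\bar x,\bar\xi)\in\mathcal{M}(\bar r)$, Lemma \ref{lem2} gives $\bar x\in\mathbb{S}_p(\bar\xi)$. Hence $(\bar x,\bar\lambda)$ with $\bar\lambda:=\bar\xi$ is feasible for \eqref{blp-1}.

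\textbf{Step 2 (transfer of neighbourhoods).} Let $\varepsilon>0$ be such that $\frac12\|\Phi_{\text{val}}\bar x-b_{\text{val}}\|_2^2\le \frac12\|\Phi_{\text{val}}x-b_{\text{val}}\|_2^2$ for every feasible $(x,r)$ of \eqref{blp2-1} with $\|(x,r)-(\bar x,\bar r)\|<\varepsilon$.

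Suppose the claim fails. Then there are feasible points $(x^k,\lambda^k)$ of \eqref{blp-1} with $(x^k,\lambda^k)\to(\bar x,\bar\lambda)$ and strictly smaller upper objective than at $\bar x$.

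For each $k$ set $r^k:=\mathbb{Q}(x^k)$. By Lemma \ref{lem1}, $x^k\in\mathbb{S}_c(r^k)$, so $(x^k,r^k)$ is feasible for \eqref{blp2-1}.

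The key point is that $\mathbb{Q}$ is continuous, being built from norms of linear maps. Since $\bar r=\mathbb{Q}(\bar x)$ by hypothesis, $x^k\to\bar x$ implies $r^k=\mathbb{Q}(x^k)\to\mathbb{Q}(\bar x)=\bar r$. Thus $(x^k,r^k)$ eventually lies in the $\varepsilon$-ball around $(\bar x,\bar r)$.

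Local optimality of $(\bar x,\bar r)$ then forces $\frac12\|\Phi_{\text{val}}\bar x-b_{\text{val}}\|_2^2\le\frac12\|\Phi_{\text{val}}x^k-b_{\text{val}}\|_2^2$, which contradicts the strict decrease.

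\textbf{Direct form of the argument.} Equivalently, choose $\delta>0$ with $\delta<\varepsilon/2$ such that $\|x-\bar x\|<\delta$ implies $\|\mathbb{Q}(x)-\bar r\|<\varepsilon/2$. Then every feasible $(x,\lambda)$ of \eqref{blp-1} in $\mathbb{B}_\delta(\bar x,\bar\lambda)$ yields a feasible $(x,\mathbb{Q}(x))$ of \eqref{blp2-1} in $\mathbb{B}_\varepsilon(\bar x,\bar r)$. Its upper objective is no smaller than at $\bar x$, and this objective does not depend on $\lambda$, so local optimality follows.

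\textbf{Main obstacle.} The delicate step is guaranteeing that the mapped points $(x,\mathbb{Q}(x))$ land in the neighbourhood of $(\bar x,\bar r)$. This is exactly why the hypothesis $\bar r=\mathbb{Q}(\bar x)$ is imposed: without it, $\mathbb{Q}(x)$ would converge to $\mathbb{Q}(\bar x)\neq\bar r$. With the hypothesis, the only ingredient needed is the continuity of $\mathbb{Q}$ (indeed its Lipschitz continuity, with constant governed by $\|\Psi\|$ and $\|D_i\|$). Note that the $\lambda$-component of the neighbourhood plays no role, since the feasibility transfer via Lemma \ref{lem1} discards $\lambda$.
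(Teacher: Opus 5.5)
Your proof is correct and follows the paper's argument: you get feasibility of $(\bar x,\bar\xi)$ from Lemma \ref{lem2}, then map each nearby feasible $(x,\lambda)$ to $(x,\mathbb{Q}(x))$ via Lemma \ref{lem1}, and use continuity of $\mathbb{Q}$ together with $\bar r=\mathbb{Q}(\bar x)$ to land in the local-optimality neighbourhood. Your $\varepsilon/2$ bookkeeping is slightly more careful than the paper's, which only checks $\mathbb{Q}(x)\in\mathbb{B}_{\varepsilon_0}(\bar r)$ rather than that the pair $(x,\mathbb{Q}(x))$ lies in $\mathbb{B}_{\varepsilon_0}(\bar x,\bar r)$.
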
	
\begin{proof}
	Since $(\bar x,\bar r)$ is a local optimal solution of   problem \eqref{blp2}, then there exists a small $\varepsilon_0>0$ such that for all $x\in\mathbb{S}_c(r)$ with $r\in\mathbb{R}^2_+$ and $(x,r)\in\mathbb{B}_{\varepsilon_0}(\bar x,\bar r)$, it holds that $\|\Phi_{\text{val}}  \bar x-b_{\text{val}}\|^{2}_{2}\leq \|\Phi_{\text{val}}  x-b_{\text{val}}\|^{2}_{2}$. Furthermore, since $\bar x\in\mathbb{S}_c(\bar r)$ and that $(\bar x,\bar\lambda)\in\mathcal{M}(\bar r)$, we get from Lemma \ref{lem2} that $\bar x\in\mathbb{S}_p(\bar\lambda)$, which shows $\bar x$ is a feasible point of \eqref{blp}.
	Suppose that $(x,\lambda)$ is an arbitrary feasible point of \eqref{blp}, i.e., $x\in\mathbb{S}_p(\lambda)$.
	From Lemma \ref{lem1} we know that $x\in\mathbb{S}_c(r)$ when $r=\mathbb{Q}(x)$. Additionally, since $\mathbb{Q}(x)$ is a continuous function, then there exists a $0<\varepsilon_1<\varepsilon_0$ such that $r=\mathbb{Q}(x)\in\mathbb{B}_{\varepsilon_0}(\bar r)$ when $(x,\lambda)\in\mathbb{B}_{\varepsilon_1}(\bar x,\bar\lambda)$.
	Hence, we get $\|\Phi_{\text{val}}  \bar x-b_{\text{val}}\|^{2}_{2}\leq \|\Phi_{\text{val}}  x-b_{\text{val}}\|^{2}_{2}$, i.e., $(\bar x,\bar\lambda)$ is a local optimal solution of \eqref{blp}.
		
\end{proof}

This lemma provides an explanation for our decision to solve the lower-level problem with inequality constraints, as represented by \eqref{blp2}, instead of addressing the original bilevel programming problem stated in \eqref{blp}.

\subsection{Convergence result of BF-DCA to problem \eqref{blp3}}

In this subsection, we focus on establish the convergence theorem of the algorithm BF-DCA under some appropriate conditions.
Firstly, we note that for the current value of $r^k$, the solution set $\mathbb{S}_c(r^k)$ of the lower-level problem \eqref{blp2} is non-empty, and both the optimal solution $\bar{x}^k$ to problem \eqref{3} and its corresponding multiplier $\xi^k$ exist, indicating that the set $\mathcal{M}(r^k)$ is also non-empty.
Next, we can conclude from Ye et al. \cite[Lemma 3]{ye2023difference} that the function $h(r)$ defined in \eqref{3} is Lipschitz continuous, and, based on Ye et al. \cite[Theorem 3]{ye2023difference}, we can infer that it is subdifferentiable which can be described as follows:

\begin{lemma}\label{lemm33}
	For any $r \in \mathbb{R}^2_+$, we obtain that $\partial h(r) = \{-\xi\}$, where $(x, \xi) \in -\mathcal{M}(r)$, with $x$ being an optimal solution of \eqref{3} and $\xi$ its corresponding multiplier.
\end{lemma}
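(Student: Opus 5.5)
The plan is to treat $h$ as the optimal-value function of the convex program \eqref{3} parametrized by the right-hand sides $r$, and to get its subdifferential from Lagrangian duality. First I will show that $h$ is convex on $\mathbb{R}^2_+$. Take $r,r'\in\mathbb{R}^2_+$ with minimizers $x,x'$ of \eqref{3}. For $t\in[0,1]$ the point $tx+(1-t)x'$ is feasible for the radii $tr+(1-t)r'$, because $\|\Psi\cdot\|_1$ and $\|\cdot\|_{\mathrm{TV}}$ are convex. Convexity of $\frac12\|\Phi_{\mathrm{tr}}\cdot-b_{\mathrm{tr}}\|_2^2$ then gives $h(tr+(1-t)r')\le th(r)+(1-t)h(r')$. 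Together with the Lipschitz continuity of $h$ from Ye et al. \cite[Lemma 3]{ye2023difference}, this makes $\partial h(r)$ the ordinary convex subdifferential, which is nonempty and compact.

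Next I will prove that every multiplier gives a subgradient. Fix $r$ and $(x,\xi)\in\mathcal{M}(r)$, where $x$ solves \eqref{3}. By convexity, the inclusion in \eqref{6} says that $x$ minimizes the Lagrangian
\[
\ell_\xi(y):=\tfrac12\|\Phi_{\mathrm{tr}}y-b_{\mathrm{tr}}\|_2^2+\xi_1\|\Psi y\|_1+\xi_2\|y\|_{\mathrm{TV}}
\]
over $\mathbb{R}^n$. Let $r'\in\mathbb{R}^2_+$ be arbitrary and let $x'$ solve \eqref{3} at $r'$. Using $\xi\ge0$, feasibility of $x'$ for $r'$, and then complementary slackness at $r$, I get the chain
\[
h(r')\ \ge\ \ell_\xi(x')-\xi_1r'_1-\xi_2r'_2\ \ge\ \ell_\xi(x)-\langle\xi,r'\rangle\ =\ h(r)+\langle\xi,r\rangle-\langle\xi,r'\rangle .
\]
The first inequality uses the constraints at $r'$, the second uses that $x$ minimizes $\ell_\xi$, and the equality uses complementary slackness. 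The chain reads $h(r')\ge h(r)+\langle-\xi,r'-r\rangle$, so $-\xi\in\partial h(r)$. This is the inclusion needed to justify the linearization $\theta_k$ in \eqref{7}.

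The reverse inclusion I would get from duality. The problem \eqref{3} satisfies Slater's condition whenever $r>0$, since $x=0$ is strictly feasible. Hence $h(r)=\max_{\xi\ge0}\{\min_y\ell_\xi(y)-\langle\xi,r\rangle\}$, so $h$ is the supremum of affine functions of $r$ with slopes $-\xi$. Danskin's theorem, or the standard sensitivity result (Rockafellar, Cor.~29.1.2), then identifies $\partial h(r)$ with $-$ the set of dual optimal multipliers. That set equals $\{\xi:(x,\xi)\in\mathcal{M}(r)\}$ for any primal minimizer $x$, so $\partial h(r)=\{-\xi\}$ with $(x,\xi)\in\mathcal{M}(r)$. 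This is how I read the statement, including its sign notation "$(x,\xi)\in-\mathcal{M}(r)$". This step can also be quoted directly from Ye et al. \cite[Theorem 3]{ye2023difference}, as the paper indicates.

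I expect the main obstacle to be the boundary of $\mathbb{R}^2_+$, for example $r_1=0$. There Slater's condition fails and the multiplier set may be empty or unbounded, so $\partial h(r)$ has to be understood relative to the domain $\mathbb{R}^2_+$. I would handle this by restricting the equality to $r$ in the interior of $\mathbb{R}^2_+$, where the duality argument is clean. On the boundary I would rely only on the one-sided inclusion $-\xi\in\partial h(r)$ from the second paragraph, which holds whenever $\mathcal{M}(r)\neq\varnothing$ and is all that the convergence analysis uses. The statement reads as if the subdifferential were a singleton. That holds only when the multiplier $\xi$ is unique, and I would state that explicitly as an assumption, in line with the singleton hypothesis of Lemma \ref{lem3}.
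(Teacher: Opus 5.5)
Your argument is correct, but it takes a genuinely different route from the paper. The paper gives no proof of its own. It simply specializes Ye et al.\ \cite[Theorem 3]{ye2023difference}, a general formula for the subdifferential of a lower-level value function in terms of lower-level KKT multipliers, and takes Lipschitz continuity from \cite[Lemma 3]{ye2023difference}.

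You instead prove convexity of $h$ directly and obtain $-\xi\in\partial h(r)$ from an explicit chain: feasibility of $x'$ together with $\xi\ge0$, minimality of $x$ for $\ell_\xi$, and complementary slackness. This chain needs no constraint qualification. It is also exactly the direction the paper uses later, in \eqref{l311} and when bounding $\{\xi^k\}$ by the Lipschitz constant of $h$; duality (or the citation) is needed only for the converse.

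The citation is shorter and more general, but your route makes explicit two things it hides. First, the statement must be read as $\partial h(r)=\{-\xi:(x,\xi)\in\mathcal{M}(r)\}$ with $\xi\ge0$. The literal ``$(x,\xi)\in-\mathcal{M}(r)$'' would make $-\xi\ge0$ a subgradient of the nonincreasing function $h$, which is impossible unless $\xi=0$. Second, the subdifferential is a singleton only when the multiplier is unique.

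Your requirement that $x$ be optimal for \eqref{3} is essential, because \eqref{6} omits primal feasibility. A pair $(x,0)$, with $x$ an unconstrained minimizer of $\frac12\|\Phi_{\mathrm{tr}}x-b_{\mathrm{tr}}\|_2^2$ that violates the constraints at $r$, lies in $\mathcal{M}(r)$. Yet $0\notin\partial h(r)$ whenever $h(r)$ exceeds the unconstrained minimum. So in your last paragraph, ``whenever $\mathcal{M}(r)\neq\varnothing$'' should read ``whenever some $(x,\xi)\in\mathcal{M}(r)$ has $x$ optimal for \eqref{3}''.

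Your boundary caveat is stronger than needed: the converse inclusion holds at every $r\in\mathbb{R}^2_+$ without Slater.
\begin{itemize}
\item If $-\zeta\in\partial h(r)$, then $\zeta\ge0$, since $h$ is nonincreasing in each coordinate.
\item Take any $y\in\mathbb{R}^n$ and insert $r'=\mathbb{Q}(y)$ into the subgradient inequality. Using $\frac12\|\Phi_{\mathrm{tr}}y-b_{\mathrm{tr}}\|_2^2\ge h(\mathbb{Q}(y))$, this gives $\ell_\zeta(y)\ge h(r)+\langle\zeta,r\rangle\ge\ell_\zeta(x)$. The last step holds because $x$ is optimal, hence feasible, at $r$.
\item Taking $y=x$ forces complementary slackness. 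The inequality itself says that $x$ minimizes $\ell_\zeta$, so $(x,\zeta)\in\mathcal{M}(r)$.
\end{itemize}
A constraint qualification only affects nonemptiness of the multiplier set. Even that holds on the boundary here, since with $\|x\|_{\mathrm{TV}}=\sum_i\|D_ix\|_1$ as in \eqref{6} both constraint sets in \eqref{3} are polyhedral.

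What does change on the boundary is that both sides become unbounded. Because $h=+\infty$ off $\mathbb{R}^2_+$, we have $\partial h(r)+\mathbb{N}_{\mathbb{R}^2_+}(r)\subseteq\partial h(r)$. So your ``nonempty and compact'' claim holds only for interior $r$, and the singleton reading of the lemma fails at every boundary point.
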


Based on this lemma and the result established in Ye et al. \cite[Theorem 1]{ye2023difference}, we present the following theorem without proof.

\begin{theorem}\label{them32}
	Let $\{z^k\}$ and $\{\alpha^k\}$ be bounded sequences generated by BF-DCA. If $\bar r \in \mathbb{R}^2_+$, then any cluster point $\bar z := (\bar x, \bar r)$ of the sequence $\{z^k\}$ is a KKT point of the problem described in \eqref{blp2}.
\end{theorem}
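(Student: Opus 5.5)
Our plan is to follow the proof of Ye et al.\ \cite[Theorem 1]{ye2023difference}, adapted to the penalty subproblem \eqref{66} and the inexactness rule \eqref{9}. Fix a cluster point $\bar z=(\bar x,\bar r)$ with $z^{k_j}\to\bar z$. First we show $\|z^{k+1}-z^k\|\to 0$ along the whole sequence. We introduce the merit function
\[
\Theta_k(z)=\tfrac12\|\Phi_{\text{val}}x-b_{\text{val}}\|_2^2+\alpha_k\max\Big\{0,\ \tfrac12\|\Phi_{\text{tr}}x-b_{\text{tr}}\|_2^2-h(r),\ \|\Psi x\|_1-r_1,\ \|x\|_{\text{TV}}-r_2\Big\}.
\]
By Lemma~\ref{lemm33}, $-\xi^k\in\partial h(r^k)$, and $h$ is convex in $r$ (it is a value function of a problem jointly convex in $(x,r)$). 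Hence $h(r)\ge h(r^k)-\langle\xi^k,r-r^k\rangle$, so $\theta_k(x,r)\ge \frac12\|\Phi_{\text{tr}}x-b_{\text{tr}}\|^2-h(r)$, with equality at $r=r^k$. Thus $\phi_k(z)\ge\Theta_k(z)+\frac{\rho_k}{2}\|z-z^k\|^2$ and $\phi_k(z^k)=\Theta_k(z^k)$.

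Next we exploit the strong convexity of $\phi_k$ (modulus $\rho_k\ge\rho_0$) together with \eqref{9}. The inexact optimality gives a sufficient-decrease inequality of the form
\[
\Theta_k(z^{k+1})+\tfrac{\rho_k}{2}\|z^{k+1}-z^k\|^2\le\Theta_k(z^k)+\tfrac{\rho_k}{2}\|z^k-z^{k-1}\|^2\cdot c
\]
for some constant $c<1$; the factor $\sqrt2/2$ in \eqref{9} is presumably chosen so that the cross term can be absorbed by Young's inequality. Boundedness of $\{\alpha_k\}$, together with the update rule and $\alpha_{\max}$, means $\alpha_k$ is eventually constant. Likewise $\rho_k$ is eventually constant. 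So for large $k$, $\Theta_k\equiv\Theta$ and $\rho_k\equiv\rho$. The quantity $\Theta(z^k)+\frac{\rho}{2}\|z^k-z^{k-1}\|^2$ is then nonincreasing and bounded below (since $\{z^k\}$ is bounded and $h$ is continuous), which yields $\sum_k\|z^{k+1}-z^k\|^2<\infty$.

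Then we pass to the limit in the optimality system of \eqref{66}. From \eqref{9} there is $w^{k+1}$ with $\|w^{k+1}\|\le\frac{\sqrt2}{2}\rho_k\|z^k-z^{k-1}\|\to0$ and
\[
w^{k+1}\in\nabla f(x^{k+1})+\rho_k(z^{k+1}-z^k)+\alpha_k\,\partial\max\{\cdots\}(z^{k+1})+\mathbb N_\Sigma(z^{k+1}).
\]
By the max-rule, the penalty subgradient equals $\alpha\sum_{i=0}^3\mu_i^{k+1}\partial(\text{piece}_i)$ with $\mu^{k+1}$ in the simplex, supported on the active pieces. Here $\partial_x\theta_k=\Phi_{\text{tr}}^\top(\Phi_{\text{tr}}x-b_{\text{tr}})$ and $\partial_r\theta_k=\xi^k$. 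The sequence $\{\xi^k\}$ is bounded: we would justify this using a Slater-type argument for \eqref{3} (since $r^k$ stays in a compact set, and the condition $\bar r\in\mathbb R^2_+$ is read as keeping $\bar r$ positive), or alternatively via the Lipschitz continuity of $h$. We extract subsequences with $\xi^{k_j}\to\bar\xi$ and $\mu^{k_j}\to\bar\mu$. By outer semicontinuity of $\mathcal M$ (the closedness argument used in Lemma~\ref{lem3}), $(\cdot,\bar\xi)\in\mathcal M(\bar r)$, so $-\bar\xi\in\partial h(\bar r)$ by Lemma~\ref{lemm33}. Closedness of the subdifferentials of $\|\Psi\cdot\|_1$ and $\|\cdot\|_{\text{TV}}$ and of $\mathbb N_\Sigma$ then gives, with $\xi_0=\alpha\bar\mu_1$, $\xi_1=\alpha\bar\mu_2$, $\xi_2=\alpha\bar\mu_3$, the first two lines of \eqref{eq:KKT_compact}.

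The main obstacle is \textbf{feasibility of $\bar z$}, which is needed for the complementarity line. Exact penalty theory only guarantees this if $\alpha$ exceeds a threshold, yet here $\alpha_k$ is merely bounded. We would use the update rule. Since $\alpha_k$ eventually stops increasing while $\Delta^{k+1}\to0$, the test $\max\{\alpha_k,1/\eta^{k+1}\}<c_\alpha/\Delta^{k+1}$ must eventually fail. This forces $1/\eta^{k+1}\ge c_\alpha/\Delta^{k+1}$, i.e.\ $\eta^{k+1}\le\Delta^{k+1}/c_\alpha\to0$. Since $\theta_k(z^{k+1})\to\frac12\|\Phi_{\text{tr}}\bar x-b_{\text{tr}}\|^2-h(\bar r)$, this shows that all constraints of \eqref{blp3} hold at $\bar z$. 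Complementarity then follows because $\bar\mu_i>0$ only for pieces that are active in the limit, and active pieces equal $\max\{0,\dots\}=0$. Finally, the KKT point of \eqref{blp3} is transferred to \eqref{blp2} via the equivalence of Gao et al.\ \cite{gao2022value}.
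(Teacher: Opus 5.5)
\textbf{Gap in the feasibility step.} Your skeleton is the one the paper intends: the paper states this theorem without proof and defers to Ye et al., Theorem 1. Most of your steps are sound:
\begin{itemize}
\item the merit function $\Theta_k$, with $\phi_k\ge\Theta_k+\frac{\rho_k}{2}\|\cdot-z^k\|^2$ and $\phi_k(z^k)=\Theta_k(z^k)$;
\item strong convexity plus \eqref{9} and Young's inequality, which give exactly your decrease estimate with $c=\tfrac12$;
\item summability of $\|z^{k+1}-z^k\|^2$ once $\alpha_k,\rho_k$ freeze;
\item boundedness of $\xi^{k_j}$ via Slater at $\bar r>0$ ($x=0$ is strictly feasible);
\item the limit passage and complementarity, \emph{provided} $\eta^{k+1}\to0$.
\end{itemize}
The problem is how you obtain $\eta^{k+1}\to0$.

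In BF-DCA the increase step is $\alpha_{k+1}\gets\min\{\alpha_k+\delta_\alpha,\alpha_{\max}\}$. So $\{\alpha_k\}$ is bounded by construction, and the boundedness hypothesis of the theorem carries no information. Being eventually constant does \emph{not} imply that the test $\max\{\alpha_k,1/\eta^{k+1}\}<c_\alpha/\Delta^{k+1}$ eventually fails. Once $\alpha_k=\alpha_{\max}$, the test can succeed at every iteration while $\alpha_{k+1}=\alpha_{\max}$ stays put. A success only yields $\eta^{k+1}>\Delta^{k+1}/c_\alpha$, which is a lower bound on the infeasibility, not an upper bound.

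Nothing else in your argument forces feasibility. With a fixed penalty $\alpha_{\max}$ below the exact-penalty threshold, limit points are only stationary for $\frac12\|\Phi_{\text{val}}x-b_{\text{val}}\|_2^2+\alpha_{\max}\max\{0,\dots\}$. They are in general infeasible for \eqref{blp3}: for small $\alpha_{\max}$ they approach minimizers of the validation loss alone, which are typically not lower-level solutions for any $r$.

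In Ye et al.\ the penalty is increased by a fixed amount with no cap. There, boundedness of the penalty sequence genuinely means the test fires only finitely often, and that is the hypothesis your argument silently uses. You need to state it explicitly: for example, assume the increase test is satisfied for only finitely many $k$, or drop the cap and assume $\{\alpha_k\}$ bounded. With that assumption, the rest of your proof goes through. Your closing ``transfer to \eqref{blp2}'' is unnecessary, since the paper's KKT notion (Definition~\ref{kktp}) is defined for \eqref{blp3}.
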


From Theorem \ref{them32}, we consider the subsequence $\{(x^{k_j}, r^{k_j})\}_{k_j \in \mathcal{K}}$, where $\mathcal{K} := \{0, 1, 2, \ldots\}$, that converges to a cluster point $(\bar x, \bar r)$, specifically, $\{(x^{k_j}, r^{k_j})\}_{k_j \in \mathcal{K}} \to (\bar x, \bar r)$.
In the following, we will demonstrate that under the Kurdyka-Lojasiewicz property, the convergence of a subsequence can be extended to the entire sequence $\{(x^k, r^k)\}$.
For this purpose, we state the definition of the Kurdyka-Lojasiewicz (KL) property, originally developed in Attouch et al.~\cite{attouch2010proximal} and later formalized in Attouch et al.~\cite{attouch2013convergence}.

\begin{definition} \cite[Kurdyka--Lojasiewicz property]{attouch2013convergence,attouch2010proximal}\label{dfkl}
	Let $f:\mathbb{R}^n\to(-\infty,+\infty]$ be a proper closed convex function.
	We say that $f$ satisfies KL property
	at a point $\bar{x}\in\mathrm{dom}\,\partial f$
	if there exist $\eta\in(0,+\infty]$, a neighborhood $\mathbb{U}$ of $\bar{x}$,
	and a function $\varphi:[0,\eta)\to\mathbb{R}_{+}^1$ such that
   (i) $\varphi(0)=0$; (ii) $\varphi$ is continuous on $[0,\eta)$ and continuously differentiable on $(0,\eta)$; (iii) $\varphi$ is concave and $\varphi'(s)>0$ for all $s\in(0,\eta)$;
	and for all $x\in\mathbb{ U}$ satisfying
	$	f(\bar{x}) < f(x) < f(\bar{x})+\eta	$,
	the following inequality holds:
	$$
	\varphi'\Big(f(x)-f(\bar{x})\Big)\,
	\mathrm{dist}\Big(0,\partial f(x)\Big)\ge 1.
	$$
\end{definition}
It is well known that the $\ell_1$-norm and the total variation regularization functions appearing in \eqref{blp2} are semi-algebraic and hence satisfy the KL property.

\begin{definition}\cite[Uniform Kurdyka--Lojasiewicz property]{attouch2013convergence}
	Let $f:\mathbb{R}^n\to(-\infty,+\infty]$ be a proper closed convex function,
	and let $\Omega\subset\mathrm{dom}\,\partial f$ be a compact set on which $f$
	is constant.
	We say that $f$ satisfies the uniform  KL property
	on $\Omega$ if there exist $\eta>0$, $\varepsilon>0$, and a function
	$\varphi:[0,\eta)\to\mathbb{R}_{+}^1$ such that (i) $\varphi(0)=0$; (ii) $\varphi$ is continuous on $[0,\eta)$ and continuously differentiable
		on $(0,\eta)$; (iii) $\varphi$ is concave and $\varphi'(s)>0$ for all $s\in(0,\eta)$;
	and for all $\bar{x}\in\Omega$ and all $x$ satisfying
	$\mathrm{dist}(x,\Omega)<\varepsilon$ and $f(\bar{x})< f(x) < f(\bar{x})+\eta$,
	the following inequality holds:
	$$
	\varphi'\Big(f(x)-f(\bar{x})\Big)\,
	\mathrm{dist}\Big(0,\partial f(x)\Big)\ge 1.
	$$
\end{definition}

Using the definitions on KL properties, we turn to establish our proposed algorithm's convergence. Similar to the work of Liu et al. \cite{liu2019refined}, we define the following function for following theoretical analysis:
\begin{equation}\label{e6}
	\begin{array}{rl}
		 \mathbb{E}_{\tilde\alpha}(z,\tilde z,\tilde \xi)
		 \triangleq& \frac12\|\Phi_{\text{val}} x-b_{\text{val}}\|_2^2+\frac{\tilde\rho}{4}\|z-\tilde z\|_2^{2} +\delta_{\Sigma}(z)\\[2mm]
		& +\tilde\alpha \max \Big\{0, \ \frac12\|\Phi_{\text{tr}} x-b_{\text{tr}}\|_2^2+\langle\tilde\xi, r\rangle+h^*(-\tilde\xi), \ \|\Psi x\|_{1}- r_{1}, \ \sum^{n}\limits_{i=1}\|D_{i}x\|_{1}- r_{2} \Big\},
	\end{array}
\end{equation}
where $h^*(\cdot)$ is a conjugate function of $h(\cdot)$.

Let $\{z^k\}$ be generated by algorithm BF-DCA. We now show the descent property of the sequence $\{\mathbb{E}_{\alpha_k}(z^{k+1},z^k,\xi^k)\}$, which plays a key rule in the subsequent convergence analysis.

\begin{lemma}\label{lem34}
	Let $\{(z^k:=x^k,r^k)\}$ be generated by algorithm BF-DCA. Then, it holds that
	\begin{equation}\label{l23}
		\mathbb{E}_{\alpha_k}(z^k,z^{k-1},\xi^{k-1})\geq \mathbb{E}_{\alpha_k}(z^{k+1},z^{k},\xi^{k})+\frac{\rho_k}{4}\|z^{k+1}-z^k\|_2^2,
	\end{equation}	
and
	\begin{equation}\label{l24}
	\mathrm{dist}\Big(0,\mathbb{E}_{\alpha_k}(z^{k+1},z^{k},\xi^{k})\Big)\leq \frac{\sqrt{2}\rho_k}{2}\|z^k-z^{k-1}\|_2+{(\rho_k+\alpha_k)}\|z^{k+1}-z^k\|_2,
   \end{equation}	
\end{lemma}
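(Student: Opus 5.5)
The plan is to follow the merit-function analysis of Liu et al.~\cite{liu2019refined}. The key observation is that the linearized constraint $\theta_k$ in \eqref{7} can be rewritten with the conjugate $h^*$. By Lemma~\ref{lemm33} we have $-\xi^k\in\partial h(r^k)$, so Fenchel's equality gives $h(r^k)+h^*(-\xi^k)=\langle -\xi^k,r^k\rangle$. Together with $\frac12\|\Phi_{\text{tr}}\bar x^k-b_{\text{tr}}\|_2^2=h(r^k)$, this yields
\[
\theta_k(x,r)=\tfrac12\|\Phi_{\text{tr}}x-b_{\text{tr}}\|_2^2+\langle\xi^k,r\rangle+h^*(-\xi^k).
\]
Consequently, for $z\in\Sigma$ the subproblem objective splits as $\phi_k(z)=\mathbb{E}_{\alpha_k}(z,z^k,\xi^k)+\frac{\rho_k}{4}\|z-z^k\|_2^2$, provided the coefficient $\tilde\rho$ in \eqref{e6} is taken to be $\rho_k$.

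For \eqref{l23} I would proceed in three steps.

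\textbf{Step 1 (upper bound on $\phi_k(z^k)$).} The Fenchel--Young inequality gives, for every $\xi$,
\[
\tfrac12\|\Phi_{\text{tr}}x-b_{\text{tr}}\|^2+\langle\xi,r\rangle+h^*(-\xi)\;\ge\;\tfrac12\|\Phi_{\text{tr}}x-b_{\text{tr}}\|^2-h(r).
\]
At $z^k$ the left side with $\xi=\xi^k$ equals the right side, so $\theta_k(z^k)$ is no larger than the corresponding expression with $\xi^{k-1}$. The other entries of the max are identical, so
\[
\phi_k(z^k)\le \mathbb{E}_{\alpha_k}(z^k,z^{k-1},\xi^{k-1})-\tfrac{\rho_k}{4}\|z^k-z^{k-1}\|_2^2 .
\]

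\textbf{Step 2 (inexact strong-convexity inequality).} The function $\phi_k+\delta_\Sigma$ is $\rho_k$-strongly convex because of the proximal terms. Take $w\in\partial\phi_k(z^{k+1})+\mathbb{N}_\Sigma(z^{k+1})$ with $\|w\|\le\frac{\sqrt2}{2}\rho_k\|z^k-z^{k-1}\|$, as guaranteed by \eqref{9}. Then
\[
\phi_k(z^k)\ge\phi_k(z^{k+1})+\langle w,z^k-z^{k+1}\rangle+\tfrac{\rho_k}{2}\|z^{k+1}-z^k\|^2 .
\]
Young's inequality turns the cross term into at least $-\frac{\rho_k}{2}\|z^k-z^{k-1}\|^2-\frac{\rho_k}{4}\|z^{k+1}-z^k\|^2$.

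\textbf{Step 3 (combine).} Insert the splitting $\phi_k(z^{k+1})=\mathbb{E}_{\alpha_k}(z^{k+1},z^k,\xi^k)+\frac{\rho_k}{4}\|z^{k+1}-z^k\|^2$ and the bound from Step 1. The terms in $\|z^{k+1}-z^k\|^2$ then leave the required $\frac{\rho_k}{4}\|z^{k+1}-z^k\|^2$, and the terms in $\|z^k-z^{k-1}\|^2$ must cancel.

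I expect this constant bookkeeping to be the main obstacle. The $\|z^k-z^{k-1}\|^2$ contributions must be absorbed exactly, and $\rho_k$ may differ from $\rho_{k-1}$; only monotonicity of $\rho_k$ and $\alpha_k$ is available from the update rules. My first attempt would be to split the Young weights differently, using $\langle w,d\rangle\le \frac{1}{2\rho_k}\|w\|^2+\frac{\rho_k}{2}\|d\|^2$. If that does not close, I would use $\rho_{k-1}\le\rho_k$ and $\alpha_{k-1}\le\alpha_k$, so that $\mathbb{E}_{\alpha_{k-1}}\le\mathbb{E}_{\alpha_k}$ because the max term is nonnegative, and shift the leftover quadratic term into the merit function on the left.

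For \eqref{l24}, read as a bound on the distance from $0$ to $\partial\mathbb{E}_{\alpha_k}$ at $(z^{k+1},z^k,\xi^k)$, I would build one subgradient block by block.
\begin{itemize}
\item \emph{$z$-block.} By the splitting above it equals $w-\frac{\rho_k}{2}(z^{k+1}-z^k)$.
\item \emph{$\tilde z$-block.} It is $-\frac{\rho_k}{2}(z^{k+1}-z^k)$.
\item \emph{$\tilde\xi$-block.} Let $\mu\in[0,1]$ be the weight of the $\theta$-entry in the subdifferential of the max. This block is $\alpha_k\mu\bigl(r^{k+1}-\partial h^*(-\xi^k)\bigr)$. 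Since $-\xi^k\in\partial h(r^k)$ gives $r^k\in\partial h^*(-\xi^k)$, I choose the element $\alpha_k\mu(r^{k+1}-r^k)$, whose norm is at most $\alpha_k\|z^{k+1}-z^k\|$.
\end{itemize}
The block subdifferential calculus for the max of convex terms needs a routine check. Summing the three norms gives $\frac{\sqrt2}{2}\rho_k\|z^k-z^{k-1}\|+(\rho_k+\alpha_k)\|z^{k+1}-z^k\|$, which is \eqref{l24}.
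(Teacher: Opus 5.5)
Your proposal is correct and follows the paper's route. It uses Fenchel--Young at $z^k$ to get $\phi_k(z^k)+\frac{\rho_k}{4}\|z^k-z^{k-1}\|_2^2\le\mathbb{E}_{\alpha_k}(z^k,z^{k-1},\xi^{k-1})$, and Fenchel's equality to get $\phi_k(z^{k+1})=\mathbb{E}_{\alpha_k}(z^{k+1},z^k,\xi^k)+\frac{\rho_k}{4}\|z^{k+1}-z^k\|_2^2$, with an inexact descent inequality for $\phi_k$ in between. For \eqref{l24} it builds the same blockwise element $\big(w-\frac{\rho_k}{2}(z^{k+1}-z^k),\,-\frac{\rho_k}{2}(z^{k+1}-z^k),\,\alpha_k\mu(r^{k+1}-r^k)\big)$.

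The only difference is that you derive the descent inequality from strong convexity instead of citing Ye et al., and there your alternative split $\langle w,d\rangle\le\frac{1}{2\rho_k}\|w\|_2^2+\frac{\rho_k}{2}\|d\|_2^2$ is the one to use. It gives exactly $\phi_k(z^{k+1})\le\phi_k(z^k)+\frac{\rho_k}{4}\|z^k-z^{k-1}\|_2^2$, which is all the paper actually uses, and everything cancels with no appeal to monotonicity of $\rho_k,\alpha_k$. Your first split, by contrast, leaves an uncancelled $-\frac{\rho_k}{4}\|z^k-z^{k-1}\|_2^2$ that monotonicity cannot absorb.
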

\begin{proof}
	(i) Note $z=(x,r)$ and denote
	\begin{equation}\label{gzd}
	g_k(x,r)\triangleq \max \Big\{0, \ \theta_{k}(x,r), \ \|\Psi x\|_{1}- r_{1}, \ \sum^{n}\limits_{i=1}\|D_{i}x\|_{1}- r_{2} \Big\},
	\end{equation}
	where $\theta_{k}(x,r)$ is defined in \eqref{7}. Consequently, the function $\phi_{k}(x,r)$ defined in \eqref{6} can be expressed in the following form:
	\begin{equation}\label{6-1}
		\begin{array}{rl}
			 \phi_{k}(z)= \frac12\|\Phi_{\text{val}} x-b_{\text{val}}\|_2^2+\frac{\rho_k}{2}\|z-z^{k}\|_2^{2} 			 +\alpha_{k} g_k(z),
		\end{array}
	\end{equation}
	It follows from the work of Ye et al. \cite[Lemma 1]{ye2023difference} that
	$$
	\phi_k(z^k)\geq\phi_k(z^{k+1})+\frac{\rho_k}{2}\|z^{k+1}-z^k\|_2^2-\frac{\rho_k}{4}\|z^k-z^{k-1}\|_2^2
	$$
	or, equivalently,
	\begin{equation}\label{l26}
		\phi_k(z^{k+1})\leq\phi_k(z^{k+1})+\frac{\rho_k}{2}\|z^{k+1}-z^k\|_2^2\leq \phi_k(z^{k})+\frac{\rho_k}{4}\|z^k-z^{k-1}\|_2^2.
	\end{equation}
    Then it holds that from \eqref{6-1}
    \begin{equation}\label{l311-1}
    \phi_k(z^{k+1})\leq \frac12\|\Phi_{\text{val}} x^k-b_{\text{val}}\|_2^2 			 +\alpha_{k} g_k(z^k)+\frac{\rho_k}{4}\|z^k-z^{k-1}\|_2^2
    \end{equation}
	Since $h^*(\cdot)$ is a conjugate function of $h(\cdot)$, then we have
	\begin{align*}
		-h(r)=-\sup_{-\xi^{k-1}}\Big\{\langle -\xi^{k-1},r \rangle-h^*(-\xi^{k-1})\Big\}=\inf_{-\xi^{k-1}}\Big\{\langle \xi^{k-1},r \rangle+h^*(-\xi^{k-1})\Big\}
	\end{align*}
	which indicates
	\begin{equation}\label{coneq}
	-h(r^k)\leq \langle \xi^{k-1},r^k \rangle+h^*(-\xi^{k-1}).
	\end{equation}
	Denote
	\begin{equation}\label{tgd}
	\tilde g(z,\tilde\xi)\triangleq\max \Big\{0, \ \frac12\|\Phi_{\text{tr}} x-b_{\text{tr}}\|_2^2+\langle\tilde\xi, r\rangle+h^*(-\tilde\xi), \ \|\Psi x\|_{1}- r_{1}, \ \sum^{n}\limits_{i=1}\|D_{i}x\|_{1}- r_{2} \Big\}.
	\end{equation}
	Then the $\mathbb{E}_{\tilde\alpha}(z,\tilde z,\tilde \xi)$ defined in \eqref{e6} takes the following form
    \begin{equation}\label{e6-2}
		\mathbb{E}_{\tilde\alpha}(z,\tilde z,\tilde \xi)
		= \frac12\|\Phi_{\text{val}} x-b_{\text{val}}\|_2^2+\frac{\tilde\rho}{4}\|z-\tilde z\|_2^{2} +\delta_{\Sigma}(z)+	\tilde\alpha \tilde g(z,\tilde\xi).
    \end{equation}
	
Firstly, based on the inequality \eqref{l311-1}, we aim to establish the following inequality:
\begin{equation}\label{l311-2}
	\frac{1}{2} \|\Phi_{\text{val}} x^k - b_{\text{val}}\|_2^2 + \alpha_k g_k(z^k) + \frac{\rho_k}{4} \|z^k - z^{k-1}\|_2^2 \leq \mathbb{E}_{\alpha_k}(z^k, z^{k-1}, \xi^{k-1}),
\end{equation}
which will demonstrate that $\phi_k(z^{k+1}) \leq \mathbb{E}_{\alpha_k}(z^k, z^{k-1}, \xi^{k-1})$.
Moreover, given that $\delta_{\Sigma}(z^k)=0$,  it is sufficient to prove that
	\begin{align*}
	&\frac12\|\Phi_{\text{val}} x^k-b_{\text{val}}\|_2^2 			 +\alpha_{k} g_k(z^k)+\frac{\rho_k}{4}\|z^k-z^{k-1}\|_2^2 \\[2mm]
	\leq& \frac12\|\Phi_{\text{val}} x^k -b_{\text{val}}\|_2^2+\frac{\rho_{k-1}}{4}\|z^k- z^{k-1}\|_2^{2}+\alpha_{k-1}\tilde g_{k-1}(z^k,\xi^{k-1}).
	\end{align*}
	This can be expressed equivalently as
	$$
	g_k(z^k) \leq \tilde g_{k-1}(z^k,\xi^{k-1}).
	$$
	This holds because the sequences $\{\alpha_k\}$ and $\{\rho_k\}$ are non-decreasing, while both $g_k(\cdot)$ and $\tilde g_{k-1}(\cdot, \cdot)$ are non-negative functions.
    From the definition of $g_k(z)$ in \eqref{gzd} and the definition of $\tilde g(z,\xi)$ in \eqref{tgd}, we get that it is equivalent to proving that
    \begin{equation}\label{7-11}
	\theta_{k}{(x^k,r^k) = \frac{1}{2}\|\Phi_{\text{tr}} x^k-b_{\text{tr}}\|^{2}_{2} -h(r^k)+\langle\xi^{k},r^k-r^{k}\rangle}\leq \frac12\|\Phi_{\text{tr}} x^k-b_{\text{tr}}\|_2^2+\langle\xi^{k-1}, r^k\rangle+h^*(-\xi^{k-1}),
    \end{equation}
   or
   $$
   	-h(r^k)\leq \langle\xi^{k-1}, r^k\rangle+h^*(-\xi^{k-1}).
   $$
   This is exactly the inequality \eqref{coneq}, which subsequently proves the desired result
   \begin{equation}\label{phie}
  \phi_k(z^{k+1})\leq \mathbb{E}_{\alpha_k}(z^k,z^{k-1},\xi^{k-1}).
  \end{equation}

   Secondly, from the fact of $(\bar x^k,\xi^k)\in\mathcal{M}(r^k)$ and the definition of $\mathcal{M}(r)$ in \eqref{6}, we know that $-\xi^k\in\partial h(r^k)$. Note that $h(r)$ is a convex function, we get $h(r)\geq h(r^k)+\langle -\xi^k,r-r^k \rangle$, i.e., $\langle -\xi^k,r \rangle-h(r)\leq \langle -\xi^k,r^k \rangle-h(r^k)$. Furthermore, based on the definition of the conjugate function, we have $  h^*(-\xi^k) = \sup_r \left\{ \langle -\xi^k, r \rangle - h(r) \right\}$. Thus, it follows that
   \begin{equation}\label{l311}
   h^*(-\xi^k) = \langle -\xi^k, r^k \rangle - h(r^k)
   \end{equation}
   This means that
   $$
   \frac{1}{2}\|\Phi_{\text{tr}} x^{k+1}-b_{\text{tr}}\|^{2}_{2}+\langle\xi^k,r^{k+1} \rangle+h^*(-\xi^k) = \frac{1}{2}\|\Phi_{\text{tr}} x^{k+1}-b_{\text{tr}}\|^{2}_{2}+\langle \xi^k, r^{k+1}-r^k \rangle - h(r^k).
   $$
   i.e., $\tilde g_k(z^{k+1},\xi^k)=g_k(z^{k+1})$. From the definition of $\phi_{k}(z^{k+1})$ in \eqref{6-1} and $\mathbb{E}_{\alpha_k}(z^k, z^{k-1}, \xi^{k-1})$ in \eqref{e6-2}, and also note $z^{k+1}\in\Sigma$, we get
   $$
   \phi_{k}(z^{k+1})=\mathbb{E}_{\alpha_k}(z^{k+1}, z^{k}, \xi^{k})+\frac{\rho_k}{4}\|z^{k+1}-z^k\|_2^2.
   $$
   It follow from \ref{phie} that
   $$
   \mathbb{E}_{\alpha_k}(z^{k+1}, z^{k}, \xi^{k})+\frac{\rho_k}{4}\|z^{k+1}-z^k\|_2^2\leq \mathbb{E}_{\alpha_k}(z^k,z^{k-1},\xi^{k-1}),
   $$
   which yields the desired result \eqref{l23} of this lemma.

   (ii) We now turn to prove the inequality given in \eqref{l24}. It is easy to deduce that the Clarke sub-differentiable \cite{clarke1990optimization} of $g_k(x,r)$ in \eqref{gzd}
    \begin{equation}
   	\partial g_k(x, r) = \left\{
   	\begin{array}{l}
   		\eta_0 \partial \theta_k(x, r) + \eta_1 \partial \|\Psi x\|_1 + \eta_2 \partial  \sum_{i=1}^n \|D_i x\|_1  \\[2mm]
   		\text{subject \ to:} \\[2mm]
   		\eta_0, \eta_1, \eta_2 \in [0, 1], \ \eta_0 + \eta_1 + \eta_2 \leq 1, \\[2mm]
   				(1-\eta_0-\eta_1-\eta_2)  g_k(x, r)  = 0, \\[2mm]
   		\eta_0 ( \theta_k(x, r) - g_k(x, r) ) = 0, \\[2mm]
   		\eta_1 ( \|\Psi x\|_1 - r_1 - g_k(x, r) ) = 0, \\[2mm]
   		\eta_2 ( \sum_{i=1}^n \|D_i x\|_1 - r_2 - g_k(x, r) ) = 0,
   	\end{array}
   	\right.
   \end{equation}
   where $\partial\theta_k(x,r)=(\Phi_{\text{tr}}^\top(\Phi_{\text{tr}} x-b_{\text{tr}}),\xi^k)\in\mathbb{R}^{n+2}$.
   Moreover, the Clarke sub-differentiable \cite{clarke1990optimization} of $\tilde g_k(z,\xi)$ in \eqref{tgd} can also be expressed as
   \begin{equation}
   	\partial_z \tilde{g}(z,\xi^k) = \left\{
   	\begin{array}{l}
   		\eta_0 \partial \big( \frac{1}{2}\|\Phi_{\text{tr}} x - b_{\text{tr}}\|_2^2 + \langle \xi^k, r \rangle + h^*(-\xi^k) \big) + \eta_1 \partial \|\Psi x\|_1 + \eta_2 \partial ( \sum_{i=1}^n \|D_i x\|_1 ) \\[2mm]
   			\text{subject  to:} \\[2mm]
   			\eta_0, \eta_1, \eta_2 \in [0, 1], \eta_0 + \eta_1 + \eta_2 \leq 1,\\[2mm]
   			(1-\eta_0-\eta_1-\eta_2) \tilde{g}(z,\xi^k) = 0, \\[2mm]
   			\eta_0 \left( \frac{1}{2}\|\Phi_{\text{tr}} x - b_{\text{val}}\|_2^2 + \langle \xi^k, r \rangle + h^*(-\xi^k) - \tilde{g}(x, r) \right) = 0, \\[2mm]
   			\eta_1 ( \|\Psi x\|_1 - r_1 - \tilde{g}(z,\xi^k) ) = 0, \\[2mm]
   			\eta_2 ( \sum_{i=1}^n \|D_i x\|_1 - r_2 - \tilde{g}(z,\xi^k) ) = 0,
   	\end{array}
   	\right.
   \end{equation}
   and that
   \begin{equation}
   	\partial_\xi \tilde{g}(z^{k+1}, \xi) = \left\{
   	\begin{array}{l}
   		\eta ( r^{k+1} - \partial h^*(-\xi) ) \\ [2mm]
   		\text{subject to:}\\[2mm]
   			 \eta \in [0, 1], \
   			\eta \big( \frac{1}{2}\|\Phi_{\text{tr}} x^{k+1} - b_{\text{tr}}\|_2^2 + \langle \xi, r^{k+1} \rangle + h^*(-\xi) - \tilde{g}(x^{k+1}, r^{k+1}, \xi) \big) = 0.
   	\end{array}
   	\right.
   \end{equation}
   From the definition of $\theta_k(x,r)$ in \eqref{7} and the relation \eqref {l311}  we get that
   \begin{align*}
   	&\theta_{k}{(x^{k+1},r^{k+1}) =\dfrac{1}{2}\|\Phi_{\text{tr}} x^{k+1}-b_{\text{tr}}\|^{2}_{2} -h(r^k)+\langle\xi^{k},r^{k+1}-r^{k}\rangle}\\[2mm]
   	=&\dfrac{1}{2}\|\Phi_{\text{tr}} x^{k+1}-b_{\text{tr}}\|^{2}_{2}+\langle \xi^k, r^{k+1} \rangle + h^*(-\xi^k),
   \end{align*}
   which means $g_k(z^{k+1})=\tilde g_k(z^{k+1},\xi^k)$ and that
   \begin{equation}\label{pareq}
        \partial_z g_k(z^{k+1})=\partial_z\tilde g_k(z^{k+1},\xi^k).
   \end{equation}
   On the other hand, from \eqref{6-1} we get that
   $$
   \partial\phi_k(z)=\Phi_{\text{val}}^\top(\Phi_{\text{val}} x-b_{\text{val}})\times\{0\}_2+\rho_k(z-z^k)+\alpha_k\partial_z g_k(z)
   $$
   and from \eqref{e6-2} that
   $$
   \partial_z\mathbb{E}_{\alpha_k}(z,z^k,\xi^k)=\Phi_{\text{val}}^\top(\Phi_{\text{val}} x-b_{\text{val}})\times\{0\}_2+\frac{\rho_k}{2}(z-z^k)+\mathbb{N}_{\Sigma}(z)+\alpha_k\partial_z\tilde g_k(z,\xi^k).
   $$
   Hence
   \begin{equation}\label{e37}
   	   \partial_z\mathbb{E}_{\alpha_k}(z,z^k,\xi^k)=\partial\phi_k(z)-\frac{\rho_k}{2}(z-z^k)+\mathbb{N}_{\Sigma}(z).
   \end{equation}
   Since $\mathbb{E}_{\alpha_k}(z,\tilde z,\tilde\xi)$ is a function with respect to $z$, $\tilde z$, and $\tilde \xi$, hence its subdifferentiable can be expressed in the following manner for each variable:
   \begin{align*}
   \partial \mathbb{E}_{\alpha_k}(z^{k+1},z^k,\xi^k)=& \partial_z \mathbb{E}_{\alpha_k}(z^{k+1},z^k,\xi^k) \times
   \partial_{\tilde z} \mathbb{E}_{\alpha_k}(z^{k+1},z^k,\xi^k)\times  \partial_{\tilde\xi} \mathbb{E}_{\alpha_k}(z^{k+1},z^k,\xi^k)\\[2mm]
   =& \partial_z \mathbb{E}_{\alpha_k}(z^{k+1},z^k,\xi^k) \times
   \Big\{\frac{\rho_k}{2}(z^k-z^{k+1})\Big\}\times  \partial_{\tilde\xi} \mathbb{E}_{\alpha_k}(z^{k+1},z^k,\xi^k)\\[2mm]
   \supseteq & \partial_z \mathbb{E}_{\alpha_k}(z^{k+1},z^k,\xi^k) \times
   \Big\{\frac{\rho_k}{2}(z^k-z^{k+1})\Big\}\times \alpha_k \eta_{k}( r^{k+1} - \partial h^*(-\xi^k) ).
   \end{align*}
   Since $z^{k+1}$ is an approximated solution of \eqref{6} such that the specified tolerance \eqref{9} is satisfied, then there exists a error vector $e_k$ such that
   $$
   e_k\in\partial\phi_k(z^{k+1})+\mathbb{N}_{\Sigma}(z^{k+1}),
   $$
   that is to say
   \begin{equation}\label{sqrt2}
     \|e_k\|_2\leq \frac{\sqrt{2}}{2}\rho_k \|z^{k}-z^{k-1}\|.
   \end{equation}
   Then, from \eqref{e37}, it is yields that
   $$
   e_k-\frac{\rho_k}{2}(z^{k+1}-z^k)\in \partial_z\mathbb{E}_{\alpha_k}(z^{k+1},z^k,\xi^k).
   $$
   It is known from \cite[Theorem 23.5]{rockafellar2015convex} that $-\xi^k\in\partial h(r^k)$ is equivalent to $r^k\in\partial h^*(-\xi^k)$, we get
   \begin{equation}
   	\mathcal{X}\triangleq
   	\left(
   	   \begin{array}{c}
   		e_k-\frac{\rho_k}{2}(z^{k+1}-z^k)\\[2mm]
   		\frac{\rho_k}{2}(z^k-z^{k+1})\\[2mm]
   		\alpha_k\eta_{k}(r^{k+1}-r^k)
   	   \end{array}
      \right)	\in \partial\mathbb{E}_{\alpha_k}(z^{k+1},z^k,\xi^k).
   \end{equation}
   Therefore, it is adequate to demonstrate the subsequent inequality in order to validate the assertion  \eqref{l24}:
   	\begin{equation}\label{l24-2}
   	\mathrm{dist}\Big(0,\mathcal{X}\Big)\leq \frac{\sqrt{2}\rho_k}{2}\|z^k-z^{k-1}\|_2+{(\rho_k+\alpha_k)}\|z^{k+1}-z^k\|_2.
   \end{equation}	
  It is easy to show that
  \begin{align*}
  	\mathrm{dist}\Big(0,\mathcal{X}\Big)&\leq\| e_k-\frac{\rho_k}{2}(z^{k+1}-z^k)\|_2+\frac{\rho_k}{2}\|(z^k-z^{k+1})\|_2+\alpha_k\eta_{k}\|(r^{k+1}-r^k)\|_2\\[2mm]
  	&\leq\| e_k\|_2+{\rho_k}\|(z^{k+1}-z^{k})\|_2+\alpha_k\eta_{k}\|(r^{k+1}-r^k)\|_2.
  \end{align*}
  Considering \eqref{sqrt2} and the fact that $\eta_k \in [0, 1]$, along with the inequality $\|r^{k+1} - r^k\|_2 \leq \|z^{k+1} - z^k\|_2$, we can derive \eqref{l24-2}. This demonstrates that the assertion in \eqref{l24} holds true, thereby completing the proof.

\end{proof}

Finally, we are ready to establish the convergence of the BF-DCA algorithm. We first observe that all the terms involved in
$\mathbb{E}_{\tilde\alpha}(z,\tilde z,\tilde \xi)$ are semi-algebraic. Since the class of semi-algebraic functions is closed under finite sums and
finite maximum operations, it follows that
$\mathbb{E}_{\tilde\alpha}(\cdot,\cdot,\cdot)$ is itself a semi-algebraic function, consequently, it is satisfies the
KL property.

\begin{theorem}
Let the sequence $\{(z^k := x^k, r^k)\}$ be generated by the BF-DCA algorithm. Then, the sequence $\{z^k\}$ is bounded.
 Suppose that  there exists a $\delta>0$ such that $r_1^k\geq \delta$ and $r_2^k\geq\delta$, then the sequence $\{z^k\}$ converges to a KKT point of problem \eqref{blp2}.
\end{theorem}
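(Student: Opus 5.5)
The plan is the standard Attouch--Bolte--Svaiter KL argument, applied to the merit function $\mathbb{E}_{\tilde\alpha}$ of \eqref{e6}, with Lemma \ref{lem34} supplying the sufficient-decrease and relative-error conditions.

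\textbf{Step 1: boundedness.} Work with the sequence $w^k:=(z^{k+1},z^k,\xi^k)$.
By Algorithm \ref{alg:BF-DCA}, $\alpha_k$ and $\rho_k$ are nondecreasing and capped at $\alpha_{\max}$ and $\rho_{\max}$. Hence after finitely many iterations they are constant, say $\alpha_k\equiv\bar\alpha$ and $\rho_k\equiv\bar\rho$, since each increase is at least $\min\{\delta_\alpha,\alpha_{\max}-\alpha_k\}$.
From that index on, \eqref{l23} says that $\mathbb{E}_{\bar\alpha}(w^k)$ is nonincreasing. Since $\mathbb{E}_{\bar\alpha}\ge 0$, it is also bounded below.
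Therefore $\frac12\|\Phi_{\rm val}x^{k+1}-b_{\rm val}\|^2$ and the penalty term $\bar\alpha\,\tilde g(z^{k+1},\xi^k)$ are bounded.
To bound $z^k$, I would use the penalty term. It bounds $\|\Psi x^{k+1}\|_1-r^{k+1}_1$, the TV excess, and $\frac12\|\Phi_{\rm tr}x^{k+1}-b_{\rm tr}\|^2+\langle\xi^k,r^{k+1}\rangle-\langle\xi^k,r^k\rangle-h(r^k)$.
The proximal terms control $\|z^{k+1}-z^k\|$. Summing \eqref{l23} gives $\sum_k\|z^{k+1}-z^k\|^2<\infty$.
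Combining these with coercivity of the TV and wavelet norms on the relevant subspace should yield boundedness of $\{z^k\}$. This step is somewhat heuristic and I expect to need care about the direction of $r$: large $r$ makes $h(r)$ nearly constant, and then $\xi^k\to0$.

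\textbf{Step 2: boundedness of the multipliers.} Here the hypothesis $r^k_i\ge\delta$ enters.
Since $r^k\ge\delta\mathbf{1}$, the point $x=0$ is strictly feasible for \eqref{3}. The Slater condition then gives a uniform bound on $\xi^k$ over the bounded set of $r^k$:
\[
\|\xi^k\|_1\le \frac{\tfrac12\|b_{\rm tr}\|^2-h(r^k)}{\delta}.
\]
So $\{w^k\}$ is bounded, and its cluster set $\Omega$ is nonempty and compact.
By Theorem \ref{them32}, every cluster point $\bar z$ of $\{z^k\}$ is a KKT point. Also, $\mathbb{E}_{\bar\alpha}$ is constant on $\Omega$. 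This follows from continuity of $\mathbb{E}_{\bar\alpha}$ on the relevant set, using that $h^*(-\xi^k)$ equals $\langle-\xi^k,r^k\rangle-h(r^k)$ by \eqref{l311}, together with $h$ continuous (Lipschitz by Ye et al.).

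\textbf{Step 3: KL argument.} The function $\mathbb{E}_{\bar\alpha}$ is semi-algebraic, so it satisfies the uniform KL property on $\Omega$.
Write $E_k:=\mathbb{E}_{\bar\alpha}(w^k)$, let $\bar E$ be the limit of $E_k$, and set $d_k:=\|z^{k+1}-z^k\|$. Combining the KL inequality with \eqref{l24} gives
\[
1\le \varphi'(E_k-\bar E)\Big(\tfrac{\sqrt2}{2}\bar\rho\, d_{k-1}+(\bar\rho+\bar\alpha)d_k\Big).
\]
Next apply concavity of $\varphi$ and \eqref{l23}:
\[
\tfrac{\bar\rho}{4}d_k^2\le \big(\varphi(E_{k-1}-\bar E)-\varphi(E_k-\bar E)\big)\Big(\tfrac{\sqrt2}{2}\bar\rho\, d_{k-1}+(\bar\rho+\bar\alpha) d_k\Big).
\]
The AM--GM inequality then produces $2d_k\le c\,(\varphi_{k-1}-\varphi_k)+\beta(d_{k-1}+d_{k-2})$ with some $\beta<2$ after suitable index bookkeeping. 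Telescoping gives $\sum_k d_k<\infty$, so $\{z^k\}$ is Cauchy and converges to the unique cluster point, which is a KKT point by Theorem \ref{them32}.

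\textbf{Main obstacle.} I expect the main obstacle to be the relative-error estimate. The right side of \eqref{l24} contains $d_{k-1}$ with coefficient $\frac{\sqrt2}{2}\bar\rho$, and this must be absorbed against the decrease constant $\bar\rho/4$. I would check that the constants close, as in Liu et al., via $\sqrt{ab}\le\frac{a}{2\epsilon}+\frac{\epsilon b}{2}$ with $\epsilon$ tuned. If they do not close, I would first try weighting the proximal term in $\mathbb{E}$ differently.
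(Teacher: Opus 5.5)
\paragraph{Comparison with the paper.} Your Steps 2--3 follow the paper's own argument: the same merit function $\mathbb{E}_{\tilde\alpha}$, Lemma~\ref{lem34} supplying sufficient decrease and relative error, a compact cluster set on which $\mathbb{E}$ is constant, uniform KL, and an AM--GM/telescoping estimate. Two of your deviations are improvements.

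First, you note that $\alpha_k,\rho_k$ are \emph{eventually constant}. The paper instead claims $\alpha_k\to\alpha_{\max}$, which the update rule does not guarantee. Eventual constancy is what actually turns \eqref{l23} into a monotone chain.

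Second, your Slater bound makes genuine use of $\delta$ and needs no bound on $r^k$. It reads $h(r^k)=\min_x L(x,\xi^k)\le L(0,\xi^k)=\frac12\|b_{\rm tr}\|^2-\langle\xi^k,r^k\rangle$, hence $\|\xi^k\|_1\le\|b_{\rm tr}\|^2/(2\delta)$. The paper instead appeals to Lipschitz continuity of $h$.

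\paragraph{The gap: Step 1.} Boundedness of $\{z^k\}$ does not follow from what you list.

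Descent of $\mathbb{E}_{\bar\alpha}$ bounds $\Phi_{\rm val}x^{k+1}$, the increments $\|z^{k+1}-z^k\|$, and the three constraint excesses. With your bound on $\xi^k$, the $\theta$-excess also bounds $\Phi_{\rm tr}x^{k+1}$. Since the Haar transform $\Psi$ is injective, the estimate $\|\Psi x^{k+1}\|_1\le r^{k+1}_1+\mathbb{E}_{\bar\alpha}(w^{k_0})/\bar\alpha$ bounds $x^{k+1}$, but only \emph{in terms of} $r^{k+1}$.

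Nothing bounds $r^k$ from above:
\begin{itemize}
\item $\sum_k\|z^{k+1}-z^k\|^2<\infty$ is compatible with unbounded drift, e.g.\ increments of order $1/k$.
\item Once $r^k$ exceeds the wavelet/TV values of a least-squares solution, $\partial h(r^k)=\{0\}$, so $\xi^k=0$. The only $r$-dependent term $\langle\xi^k,r\rangle$ then vanishes from $\theta_k$ and $\tilde g$.
\end{itemize}
So ``coercivity on the relevant subspace'' does not close the argument. The component of $x$ in $\ker\Phi_{\rm tr}\cap\ker\Phi_{\rm val}$, which is nontrivial since $m\ll n$, is controlled only through $r$.

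Everything downstream rests on this step: compactness of $\Omega$, the uniform KL property, and the appeal to Theorem~\ref{them32}, which itself \emph{assumes} $\{z^k\}$ bounded.

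The paper does not fill this gap either. Its proof bounds $\xi^k$ and then simply asserts that $\{(z^k,z^{k-1},\xi^k)\}$ is bounded. The first claim of the statement is therefore not established there. To obtain a complete proof, either add boundedness of $\{r^k\}$ as a hypothesis (by the reduction above this is equivalent to bounding $\{z^k\}$), or give a new argument that $r^k$ cannot drift. With that hypothesis, your Steps 2--3 go through.

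\paragraph{Smaller points in Step 3.} Your combined display is off by one index. The KL inequality must be taken at $w^{k-1}$, so that $\varphi'(E_{k-1}-\bar E)$ multiplies $E_{k-1}-E_k\ge\frac{\bar\rho}{4}d_k^2$. The second factor is then $\frac{\sqrt2}{2}\bar\rho\,d_{k-2}+(\bar\rho+\bar\alpha)d_{k-1}$, which is what your next line actually uses.

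Telescoping $2d_k\le c(\varphi_{k-1}-\varphi_k)+\beta(d_{k-1}+d_{k-2})$ requires $\beta<1$, not $\beta<2$. This is harmless, and so is the ``main obstacle'' you flag. From $d_k^2\le A(d_{k-1}+d_{k-2})$, the estimate $2d_k\le A/\epsilon+\epsilon(d_{k-1}+d_{k-2})$ holds for any $\epsilon>0$. Hence every constant in \eqref{l24} is absorbed. The paper does this with $2\sqrt{ab}\le a+b$ and a factor $4$ on the left, which amounts to $\beta=\tfrac12$.
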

\begin{proof}
	To establish the convergence of the sequence $\{z^k\}$, it is sufficient to show that $\sum_{k=1}^\infty \|z^{k+1} - z^k\|_2<\infty$.
	
	From the steps of algorithm BF-DCA, we see that $\alpha_k\to\alpha_{\max}$ as $k\to\infty$ and $\rho_k\to\rho_{\max}$ as $k\to\infty$. From the definition of $\mathbb{E}_{\tilde\alpha}(z,\tilde z,\tilde \xi)$ in \eqref{e6}, we see that the sequence $\{\mathbb{E}_{\tilde\alpha}(z^{k+1},z^k,\xi^k)\}$ is bounded below.  Then from \eqref{l23} in Lemma \ref{lem34} we get  i.e., $\|z^{k+1}-z^k\|_2^2\to 0$ and $k\to\infty$.
	
	Let $\mathbb{C}$ be a set that contains all the cluster points of the sequence $\{(z^k,z^{k-1},\xi^k)\}$. Clearly, $\mathbb{C}$ is a closed set and that $\mathrm{dist((z^k,z^{k-1},\xi^{k-1}),0)}\to 0$ as $k\to\infty$. It follows from Theorem \ref{them32} that any cluster point of the sequence $\{z^k\}$ is a KKT point of the problem described in \eqref{blp2}, and its corresponding point belongs to the set $\mathbb{C}$.
	Because it has been known that $h(r)$ given in \eqref{3} is Lipschitz continuous on $\mathbb{R}^2_+$, then $\partial h(r)$ is bounded on the interior of $\mathbb{R}^2_+$. On the other hand, since it is assumed that there exists a constant $\delta > 0$ such that $r_1^k \geq \delta$ and $r_2^k \geq \delta$, and that $-\xi^k \in \partial h(r^k)$, it follows that the sequence $\{\xi^k\}$ is bounded. Therefore, the sequence $\{(z^k, z^{k-1}, \xi^k)\}$ is also bounded, meaning that the set $\mathbb{C}$ is bounded. Furthermore, since $\mathbb{C}$ is a closed set, it is must be a compact set.
	
	From \eqref{l23} in Lemma \ref{lem34} and $\|z^{k+1}-z^k\|_2\to 0$, we know that
	$\{\mathbb{E}_{\alpha_{k-1}}(z^k,z^{k-1},\xi^{k-1})\}$ is a bounded below and nonincreasing sequence. Hence, for any subsequence $\{\mathbb{E}_{\alpha_{k_j-1}}(z^{k_j},z^{k_j-1},\xi^{k_j-1})\}$, it holds that
	$$
	\bar{\mathbb{E}}\triangleq \lim_{j\to\infty} \mathbb{E}_{\alpha_{k_j-1}}(z^{k_j},z^{k_j-1},\xi^{k_j-1})=\lim_{k\to\infty}\mathbb{E}_{\alpha_{k-1}}(z^k,z^{k-1},\xi^{k-1}),
	$$
	which shows that $\bar{\mathbb{E}}$ is the function value of $\mathbb{E}(\cdot,\cdot,\cdot)$ on the compact set $\mathbb{C}$.
	
	Because $\lim_{k\to\infty}\mathrm{dist}((z^k,z^{k-1},\xi^{k-1}),\mathbb{C})=0$ and  $\lim_{k\to\infty}\mathbb{E}_{\alpha_{k-1}}(z^k,z^{k-1},\xi^{k-1})=\bar{\mathbb{E}}$, then for any $\varepsilon_1>0$ and $\varepsilon_2>0$, there exist a $k_0$ such that for any $k\geq k_0$, it holds that $\mathrm{dist}((z^k,z^{k-1},\xi^{k-1}),\mathbb{C})<\varepsilon_1$ and $\bar{\mathbb{E}}<\mathbb{E}_{\alpha_{k-1}}(z^k,z^{k-1},\xi^{k-1})<\bar{\mathbb{E}}+\varepsilon_2$.
	Because $\mathbb{E}_{\tilde{\alpha}}(z,\tilde z,\tilde\xi)$ is a KL function also a constant on $\mathbb{C}$, from Definition \ref{dfkl}, we know that there is continuous concave function $\varphi(\cdot)$ such that for any $k\geq k_0$, it holds that
    $$
    \varphi'\Big(\mathbb{E}_{\alpha_{k-1}}(z^k,z^{k-1},\xi^{k-1})-\bar{\mathbb{E}}\Big)\mathrm{dist}\Big(0,\partial \mathbb{E}_{\alpha_{k-1}}(z^k,z^{k-1},\xi^{k-1})\Big)\geq 1.
    $$
	or, from \eqref{l24} that
	\begin{equation}\label{e39}
	\varphi'\Big(\mathbb{E}_{\alpha_{k-1}}(z^k,z^{k-1},\xi^{k-1})-\bar{\mathbb{E}}\Big)\bigg(\frac{\sqrt{2}\rho_{k-1}}{2}\|z^{k-1}-z^{k-2}\|_2^2+{(\rho_{k-1}+\alpha_{k-1})}\|z^{k}-z^{k-1}\|_2^2\bigg)\geq 1.
	\end{equation}
	Since $\varphi'(\cdot)$ is non-increasing from the fact that $\varphi(\cdot)$ is concave, then it holds that
	\begin{align*}
		&\varphi\Big(\mathbb{E}_{\alpha_{k-1}}(z^k,z^{k-1},\xi^{k-1})-\bar{\mathbb{E}}\Big)-\varphi\Big(\mathbb{E}_{\alpha_{k}}(z^{k+1},z^{k},\xi^{k})-\bar{\mathbb{E}}\Big)\\[2mm]
		\geq &\varphi'\Big(\mathbb{E}_{\alpha_{k-1}}(z^k,z^{k-1},\xi^{k-1})-\bar{\mathbb{E}}\Big)\bigg(\mathbb{E}_{\alpha_{k-1}}(z^k,z^{k-1},\xi^{k-1})- \mathbb{E}_{\alpha_{k}}(z^{k+1},z^{k},\xi^{k})\bigg).
	\end{align*}
	Combining it with \eqref{l23} in Lemma \ref{lem34}, we get
	\begin{align}
		&\varphi'\Big(\mathbb{E}_{\alpha_{k-1}}(z^k,z^{k-1},\xi^{k-1})-\bar{\mathbb{E}}\Big)\cdot\frac{\rho_k}{4}\|z^{k+1}-z^k\|_2^2\nonumber\\[2mm]
		\leq& \varphi'\Big(\mathbb{E}_{\alpha_{k-1}}(z^k,z^{k-1},\xi^{k-1})-\bar{\mathbb{E}}\Big)\cdot \bigg(\mathbb{E}_{\alpha_k}(z^k,z^{k-1},\xi^{k-1})- \mathbb{E}_{\alpha_k}(z^{k+1},z^{k},\xi^{k}) \bigg)\nonumber\\[2mm]
		\leq & \varphi\Big(\mathbb{E}_{\alpha_{k-1}}(z^k,z^{k-1},\xi^{k-1})-\bar{\mathbb{E}}\Big)-\varphi\Big(\mathbb{E}_{\alpha_{k}}(z^{k+1},z^{k},\xi^{k})-\bar{\mathbb{E}}\Big).\label{e40}
	\end{align}
	Substituting \eqref{e39} into \eqref{e40}, we get
	\begin{align}
	&\bigg(\varphi\Big(\mathbb{E}_{\alpha_{k-1}}(z^k,z^{k-1},\xi^{k-1})-\bar{\mathbb{E}}\Big)-\varphi\Big(\mathbb{E}_{\alpha_{k}}(z^{k+1},z^{k},\xi^{k})-\bar{\mathbb{E}}\Big) \bigg)\nonumber\\[2mm]
	\cdot&
	\bigg(\frac{\sqrt{2}\rho_{k-1}}{2}\|z^{k-1}-z^{k-2}\|_2+{(\rho_{k-1}+\alpha_{k-1})}\|z^{k}-z^{k-1}\|_2\bigg)\geq \frac{\rho_k}{4}\|z^{k+1}-z^k\|_2^2.\label{e41}
	\end{align}
	On the other hand, because the second term in the left-hand side satisfies
	\begin{align*}
		&\frac{\sqrt{2}\rho_{k-1}}{2}\|z^{k-1}-z^{k-2}\|_2+{(\rho_{k-1}+\alpha_{k-1})}\|z^{k}-z^{k-1}\|_2\\[2mm]
	   =&	(\rho_{k-1}+\alpha_{k-1})\bigg(\frac{\sqrt{2}\rho_{k-1}}{2(\rho_{k-1}+\alpha_{k-1})}\|z^{k-1}-z^{k-2}\|_2+\|z^{k}-z^{k-1}\|_2 \bigg)\\[2mm]
	   \leq & (\rho_{k-1}+\alpha_{k-1})\bigg( \|z^{k-1}-z^{k-2}\|_2+\|z^{k}-z^{k-1}\|_2 \bigg),
	\end{align*}
	then it follows that
		\begin{align*}
		&\frac{4(\rho_{k-1}+\alpha_{k-1})}{\rho_k}\bigg(\varphi\Big(\mathbb{E}_{\alpha_{k-1}}(z^k,z^{k-1},\xi^{k-1})-\bar{\mathbb{E}}\Big)-\varphi\Big(\mathbb{E}_{\alpha_{k}}(z^{k+1},z^{k},\xi^{k})-\bar{\mathbb{E}}\Big) \bigg)\nonumber\\[2mm]
		\cdot&
		\bigg( \|z^{k-1}-z^{k-2}\|_2+\|z^{k}-z^{k-1}\|_2 \bigg)\geq \|z^{k+1}-z^k\|_2.
	\end{align*}
	Taking square-root on the both sides of the above inequality and using the relation $2\sqrt{ab}\leq a+b$, we gets that
	\begin{align*}
		4\|z^{k+1}-z^k\|_2\leq& \Big( \|z^{k-1}-z^{k-2}\|_2+\|z^{k}-z^{k-1}\|_2 \Big)\\[2mm]
		&+\frac{16(\rho_{k-1}+\alpha_{k-1})}{\rho_k}\bigg(\varphi\Big(\mathbb{E}_{\alpha_{k-1}}(z^k,z^{k-1},\xi^{k-1})-\bar{\mathbb{E}}\Big)-\varphi\Big(\mathbb{E}_{\alpha_{k}}(z^{k+1},z^{k},\xi^{k})-\bar{\mathbb{E}}\Big) \bigg).
	\end{align*}
	For convenience of the following analysis, we set $\alpha_k\equiv\alpha_{\max}$ and $\rho_k\equiv\rho_{\max}$ for each $k\geq k_0$ with a given $k_0$. Adding both sides of the above inequality from $i=k_0$ to $k$, we get
	\begin{align}
		\sum_{i=k_0}^k 4\|z^{i+1}-z^i\|_2\leq& \sum_{i=k_0}^k\Big( \|z^{i-1}-z^{i-2}\|_2+\|z^{i}-z^{i-1}\|_2 \Big)\nonumber\\
		&+\frac{16(\rho_{\max}+\alpha_{\max})}{\rho_{\max}}\bigg(\varphi\Big(\mathbb{E}_{\alpha_{\max}}(z^{k_0},z^{k_0-1},\xi^{k_0-1})-\bar{\mathbb{E}}\Big)-\varphi\Big(\mathbb{E}_{\alpha_{\max}}(z^{k+1},z^{k},\xi^{k})-\bar{\mathbb{E}}\Big) \bigg).\label{e43}
	\end{align}
	On the other hand, we have
		\begin{align*}
		&\sum_{i=k_0}^k 4\|z^{i+1}-z^i\|_2- \sum_{i=k_0}^k\Big( \|z^{i-1}-z^{i-2}\|_2+\|z^{i}-z^{i-1}\|_2 \Big)\\
		=&\sum_{i=k_0}^k 2\|z^{i+1}-z^i\|_2- \sum_{i=k_0}^k\Big( \|z^{i-1}-z^{i-2}\|_2+\|z^{i}-z^{i-1}\|_2 -2\|z^{i+1}-z^i\|_2\Big)\\[2mm]
		=&\sum_{i=k_0}^k 2\|z^{i+1}-z^i\|_2-\Big(2\|z^{k_0}-z^{k_0-1}\|_2+\|z^{k_0-1}-z^{k_0-2}\|_2 \Big)+\Big(2\|z^{k+1}-z^{k}\|_2+3\|z^{k}-z^{k-1}\|_2 \Big).
	\end{align*}
	From the fact that $\varphi(\cdot)>0$, we get that the inequality \eqref{e43} can further be reformulated as
	\begin{align}
	    \sum_{i=k_0}^k 2\|z^{i+1}-z^i\|_2\leq& \Big(2\|z^{k_0}-z^{k_0-1}\|_2+\|z^{k_0-1}-z^{k_0-2}\|_2 \Big)\nonumber\\[2mm]
	    &+\frac{16(\rho_{\max}+\alpha_{\max})}{\rho_{\max}}\varphi\Big(\mathbb{E}_{\alpha_{\max}}(z^{k_0},z^{k_0-1},\xi^{k_0-1})-\bar{\mathbb{E}}\Big).\label{e44}
	\end{align}
Since it is assumed that
$\mathbb{E}_{\alpha_{k_0}}(z^{k_0}, z^{k_0-1}, \xi^{k_0-1}) \to \bar{\mathbb{E}}$
as $k_0 \to \infty$, and it has been shown that
$\|z^{k_0+1} - z^{k_0}\|_2 \to 0$ and
$\|z^{k_0-1} - z^{k_0-2}\|_2 \to 0$ as $k_0 \to \infty$,
it follows from \eqref{e44} that
$$
\sum_{i=k_0}^{k} \|z^{i+1} - z^{i}\|_2 < \infty.
$$
Consequently,
$$
\sum_{k=1}^{\infty} \|z^{k+1} - z^{k}\|_2 < \infty.
$$
Hence, $\{z^k\}$ is a Cauchy sequence and therefore converges.
\end{proof}

%%%%%%%%%%%%%%%%%%%%%%%%%%%%%%%%%%%%%%%%%%%%%%%%%%%%%%%%%%%%%%%%%
\section{Numerical experiments on some  images}\label{sec4}
\setcounter{equation}{0}
%%%%%%%%%%%%%%%%%%%%%%%%%%%%%%%%%%%%%%%%%%%%%%%%%%%%%%%%%%%%%%%%
In this section, we evaluate the practical performance of the BF-DCA algorithm for image restoration tasks and compare it with other parameterized approaches. All the tested algorithms are implemented in Python and executed on a personal computer equipped with an Intel Core i7 processor at 1.80 GHz and 8 GB of RAM.
The benchmark methods used for performance comparison are summarized as follows:
\begin{itemize}
	\item[-] \textbf{Grid Search (GS)}: A brute-force strategy that exhaustively explores a predefined uniform hyperparameter grid by evaluating all possible hyperparameter combinations.
	\item[-] \textbf{Random Search (RS)}: A stochastic approach that uniformly samples $30$ hyperparameter configurations for the elastic-net model and $50$ configurations for the generalized elastic-net model, followed by performance evaluation.
	\item[-] \textbf{TPE}: A Bayesian optimization method based on the Tree-structured Parzen Estimator, which models the objective function using Gaussian mixture models; see Bergstra et al.~\cite[pp.~115--123]{B2013}.
\end{itemize}

Although the lower-level optimization problem \eqref{3} can be efficiently solved by ADMM, we adopt a PyTorch-based implementation to solve it in our experiments.
Additionally, to ensure a fair comparison, all subproblems arising from the GS and RS frameworks are also solved using the same PyTorch-based implementation.
Finally, the implementation of the TPE method used in this study is publicly available at
\url{https://github.com/hyperopt/hyperopt}.

\subsection{General descriptions}

In this evaluation, we assess the practical performance of all algorithms using actual brain tumor images sized at \(256 \times 256\) sourced from the BSDS300 library, which is publicly accessible at \url{https://www.kaggle.com/datasets/thomasdubail/brain-tumors-256x256}. For each real image, we rescale pixel values from integers in $[0,255]$ to real numbers in $[0,1]$.
In \eqref{orig}, the linear operator $\Phi$ is chosen to be a partial Fourier operator, and the noise is assumed to be random noise and impulse noise.
Here, the $k$-space undersampling is simulated using a pseudo-random sampling mask, which samples the Fourier domain along multiple radial lines emanating from the center.
The Haar wavelet transform matrix $\Psi$ is constructed using the open-source PyWavelets library in Python, which provides functions for one-dimensional, two-dimensional, and three-dimensional wavelet transforms, as well as filter bank design, analysis, and processing.
For the considered noise $\epsilon\in\mathbb{R}^m$, we first define "$\text{min}$" and "$\text{max}$" as the minimum and maximum values of $\Phi {x}^\star$, respectively, where $x^\star$ is the ground-truth.
In the salt-and-pepper noise case, each $\epsilon_i$ is randomly set to either "$\text{min}$" or "$\text{max}$",
whereas in the random noise case, $\epsilon_i$ is randomly drawn from the interval "$[\text{min}, \text{max}]$". In addition, we set the noise level as $0.01$ in the salt-and-pepper noise case.

In this test, we evaluate the performance of each algorithm using three quantitative metrics, namely:
\begin{itemize}
	\item[(i)] Relative $\ell_2$-norm error (RLNE) to measure the overall relative error between the reconstructed solution and the ground truth:
	$$
	\mathrm{RLNE} \triangleq \frac{\|\bar x - x^\star\|_2}{\|\bar x\|_2};
	$$	
	\item[(ii)] Peak signal-to-noise ratio (PSNR) to measure the discrepancy between the restored image and the ground truth:
	$$
	\mathrm{PSNR} \triangleq 20 \log_{10}\!\left( \frac{\sqrt{n}}{\|\bar x - x^\star\|_2} \right)\ \text{(dB)};
	$$	
	\item[(iii)] Normalized relative error (NRE) to quantify the relative residual associated with the model constraint or data consistency term:
	$$
	\mathrm{NRE} \triangleq \frac{\|\Psi x^\star - b\|_2}{1 + \|b\|_2}.
	$$
\end{itemize}

In this test, we set $c_{\alpha} = 1$, and initialize $\alpha_0 = 0$, with an increment of $\delta_{\alpha} = 0.005$ until it reaches its maximum value $\alpha_{\max}=10$. We set $ \rho_k $ to be a constant, $ \rho_k = 0.001 $, i.e., $ \rho_0 = \rho_{\max} = 0.001 $, with an increment of $ \delta_{\rho} = 0 $.
For the algorithm's initialization, we set the starting point to $ x^0 = 0_n $ and $ r^0 = (0.1, 0.5)^\top $, where $ 0_n $ denotes a zero vector of dimension $ n $.

\subsection{Test on two single images}

In this section, we assess the performance of the BF-DCA algorithm using a phantom   Shepp-Logan image of size $64 \times 64$ and a brain MRI image of size $128 \times 128$. Additionally, we compare the performance of BF-DCA with the GS, GS, and TPE algorithms based on the aforementioned metrics.
Since only a single image is used in this test, we did not distinguish between the training set and the validation set, i.e., $ \Phi_{\text{val}} = \Phi_{\text{tr}} $ and $ b_{\text{val}} = b_{\text{tr}} $.

In first test on phantom image, we use a $57\%$ sampling rate which means that the  the observed data $b\in\mathbb{R}^m$ is derived using the generating process \eqref{orig}  with $m=2335\gets 64\times 64\times 57\%$ from the ground-truth $x^\star\in\mathbb{R}^{n}$ with $n=4096\gets 64\times 64$.
The ground truth and its undersampled image with added salt-and-pepper noise are shown in (a) and (b) of Figure \ref{fig:graph1}, respectively.
We apply the proposed BF-DCA algorithm, along with other comparison algorithms, to reconstruct the original image from the observed data $ b$, and present the restored results in (c) to (f) of Figure \ref{fig:graph1}.
By comparing the images in (c), (d), (e), and (f), we can clearly observe that all the algorithms produce acceptable restored images. However, the image in (f) appears to be slightly better, showing a more accurate restoration.
Additionally, the RLNE value for the image restored by BF-DCA is $5.40\%$, which is slightly smaller than the $6.10\%$ achieved by GS, the $6.13\%$ by RS, and the $7.04\%$ by TPE.
In summary, this toy test demonstrates that the BF-DCA algorithm, based on the bilevel programming problem \eqref{blp2}, outperforms other approaches in terms of restoring a phantom image.

\begin{figure}[h]
	\centering
	\includegraphics[width=0.18\textwidth]{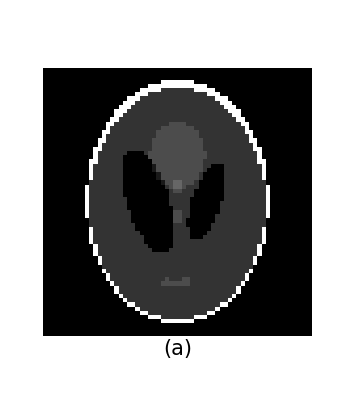}\hspace{-.5cm}
	\includegraphics[width=0.18\textwidth]{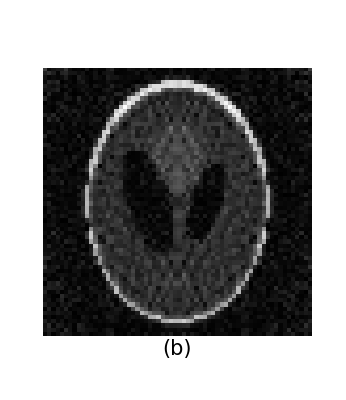}\hspace{-.5cm}
	\includegraphics[width=0.18\textwidth]{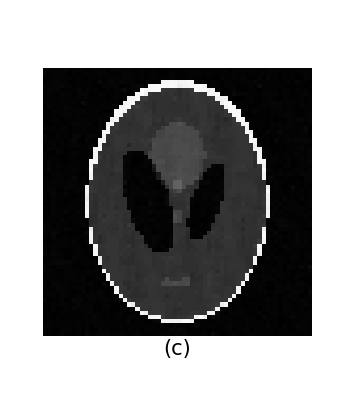}\hspace{-.5cm}
	\includegraphics[width=0.18\textwidth]{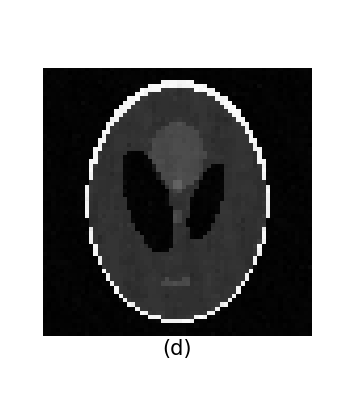}\hspace{-.5cm}
	\includegraphics[width=0.18\textwidth]{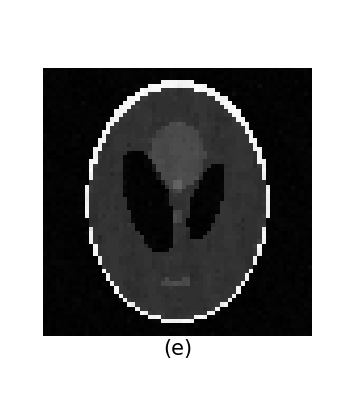}\hspace{-.5cm}
	\includegraphics[width=0.18\textwidth]{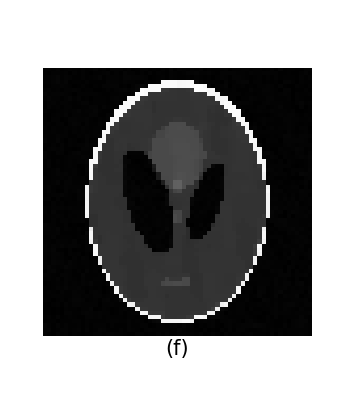}
	\vspace{-0.5cm}
	\caption{The ground-truth phantom image (a),  the undersampled image with added noise (b), and the restored images by algorithms GS (c), RS (d), TPE (e), and IR-DCA (f).}
	\label{fig:graph1}
\end{figure}

In the second test, we evaluate the convergence behavior of each algorithm by analyzing how the PSNR and RLNE values change as the computing time and the number of iterations increase.
In algorithms of GS, RS and TPE, we  select values for $ u_1 $ and $ u_2 $ within the interval $ [-9, -3] $, and use the relations $ u_1 = \log_{10}(\lambda_1) $ and $ u_2 = \log_{10}(\lambda_2) $ to determine the values of $ \lambda_1 $ and $ \lambda_2 $.
In the case of the GS algorithm, a hyperparameter search is performed over a $14 \times 14$ uniform grid within the interval $[-9, -3]$, yielding a total of $196$ combinations of $(u_1, u_2)$. Conversely, for the RS algorithm, $200$ pairs of values for $u_1$ and  $u_2)$ are selected randomly and uniformly from the interval $[-9, -3]$. In the case of TPE, $u_1$ and $u_2$ are randomly chosen from $200$ points within the interval $[-9, -3]$, using a probabilistic model to estimate their distribution.
To clearly illustrate the convergence behavior of each algorithm, we plot the decreasing curves of the PSNR and RLNE values as a function of iterations and computation time in Figure \ref{fig:graph2}.
From plots (b) and (d) in Figure \ref{fig:graph2}, we observe that the RLNE values corresponding to all algorithms decrease monotonically, indicating that all the tested algorithms are capable of restoring high-quality images from noisy observations.
We also observe that the red curve corresponding to the BF-DCA algorithm lies at the bottom, which indicates that BF-DCA outperforms the other algorithms by requiring less computation time and fewer iterations.
From plots (a) and (c), we observe that the red line is positioned at the top, indicating that BF-DCA is the most effective method for enhancing the signal-to-noise ratio.

\begin{figure}[h]
	\centering
	\includegraphics[height=8cm]{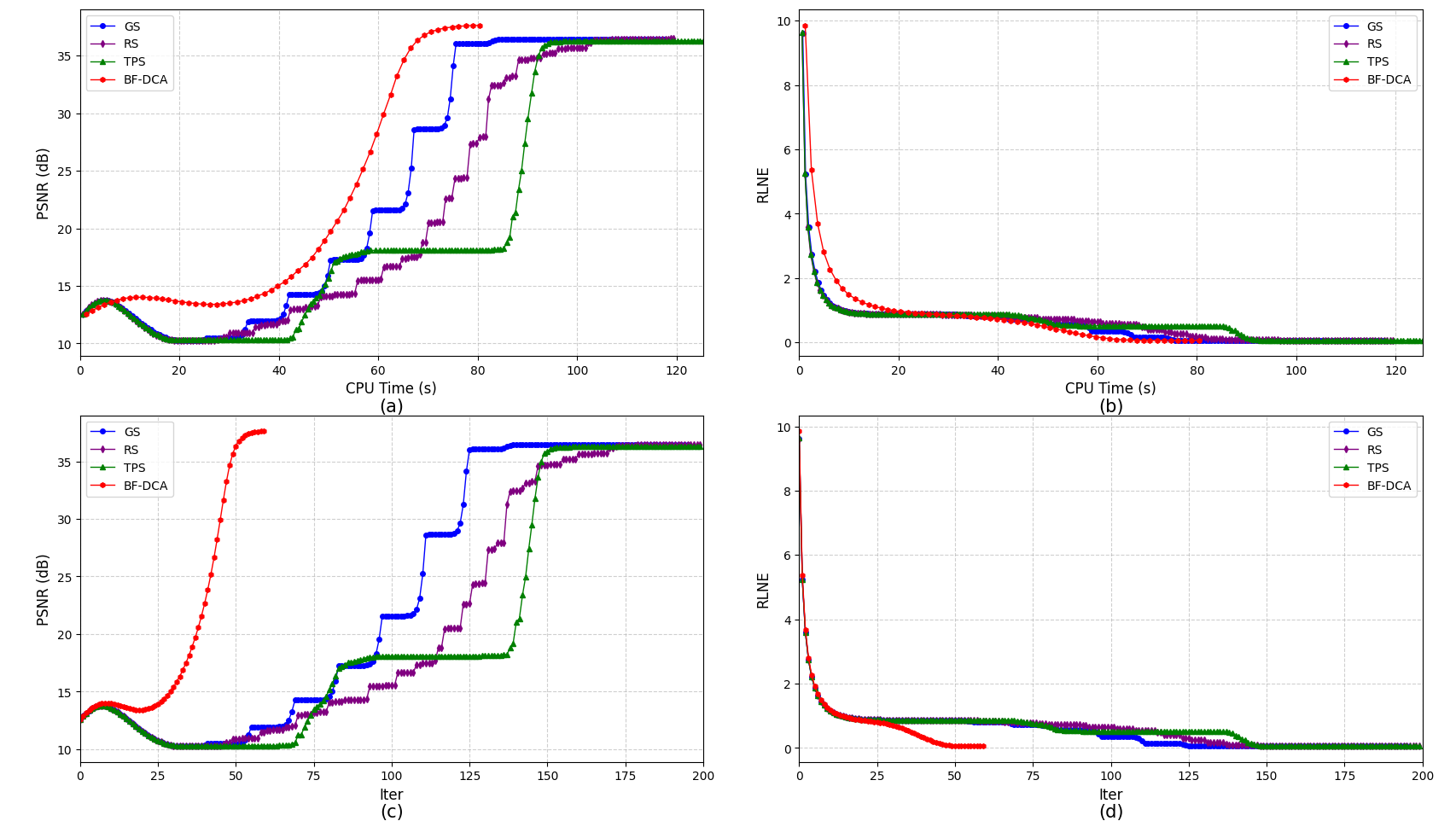}
	\vspace{0.1cm}
	\caption{Convergence behavior of all the tested algorithms on the phantom image: PSNR value versus computing time (a) and iterations (c); RLNE values versus computing time (b) and  iterations (d).}
	\label{fig:graph2}
\end{figure}
\newpage
In the third test, we further evaluate the performance of each algorithm using a brain magnetic resonance image. In this test, the parameter values for BF-DCA remain consistent with the previous settings. However, for GS, a hyperparameter search is conducted using a uniform grid of $10 \times 10$. In the case of RS, $100$ pairs of values for $(u_1, u_2)$ are selected randomly and uniformly. Meanwhile, TPE involves randomly selecting $100$ values of $(u_1, u_2)$.
The original image is displayed in (a) of Figure \ref{fig:graph3}, while its noisy undersampled version is presented in (b).
The recovered images by algorithms GS, RS, TPE, and BF-DCA are displayed in (c), (d), (e), and (f), respectively.
By comparing the images produced by each algorithm, we see that the image corresponding to BF-DCA displayed in (f) is closer to the ground truth (a), while the image in (e) is much farther from (a). This observation once again illustrates that the BF-DCA algorithm and combining with the bilevel programming approach \eqref{blp2}, is the best when compared to the other popular approaches.
Furthermore, we also plot the convergence curves regarding to the values of RLNE and PSNR as the iterative process in Figure  \ref{fig:graph4}.
From plots (b) and (d), we observe that all the tested algorithms can recover the brain image in terms of RLNE values, but BF-DCA performs the best. Additionally, from plots (a) and (c), we can see that our proposed algorithm, BF-DCA, significantly outperforms its competitors in terms of improving PSNR values.
Finally, to evidently illustrate the significant superiority of BF-DCA over other approaches, we list the detailed results of each algorithm with respect to the measurements Time, RLNE, and PSNR in Table \ref{tb1}.
We see from this table that, among all methods, the BF-DCA algorithm demonstrates superior performance on both tested images, achieving the shortest time, the lowest RLNE values, and the highest PSNR values. In summary, it can be concluded that the BF-DCA algorithm is the best in this comparison across both datasets.

\begin{figure}[htbp]
	\centering
	\includegraphics[width=0.16\textwidth]{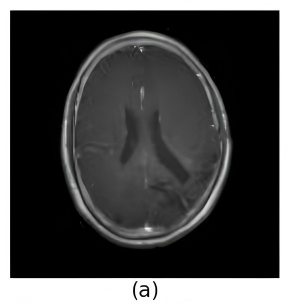}\hspace{-.1cm}
	\includegraphics[width=0.16\textwidth]{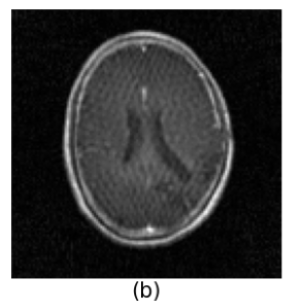}\hspace{-.1cm}
	\includegraphics[width=0.16\textwidth]{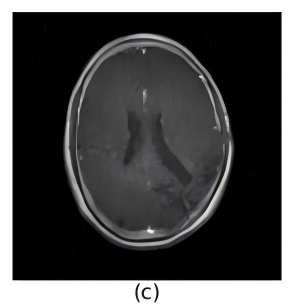}\hspace{-.1cm}
	\includegraphics[width=0.16\textwidth]{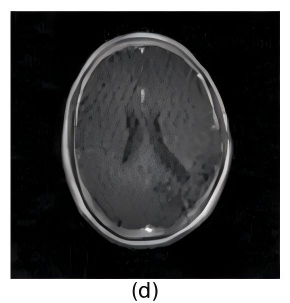}\hspace{-.1cm}
	\includegraphics[width=0.16\textwidth]{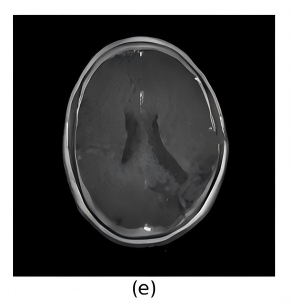}\hspace{-.1cm}
	\includegraphics[width=0.16\textwidth]{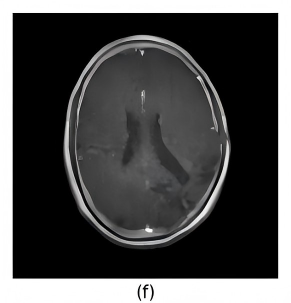}
	\vspace{-0.2cm}
	\caption{The ground-truth brain magnetic resonance image (a),  the undersampled image with added noise (b), and the restored images by algorithms GS (c), RS (d), TPE (e), and IR-DCA (f).}
	\label{fig:graph3}
\end{figure}
\begin{figure}[htbp]
	\centering
	\includegraphics[height=8cm]{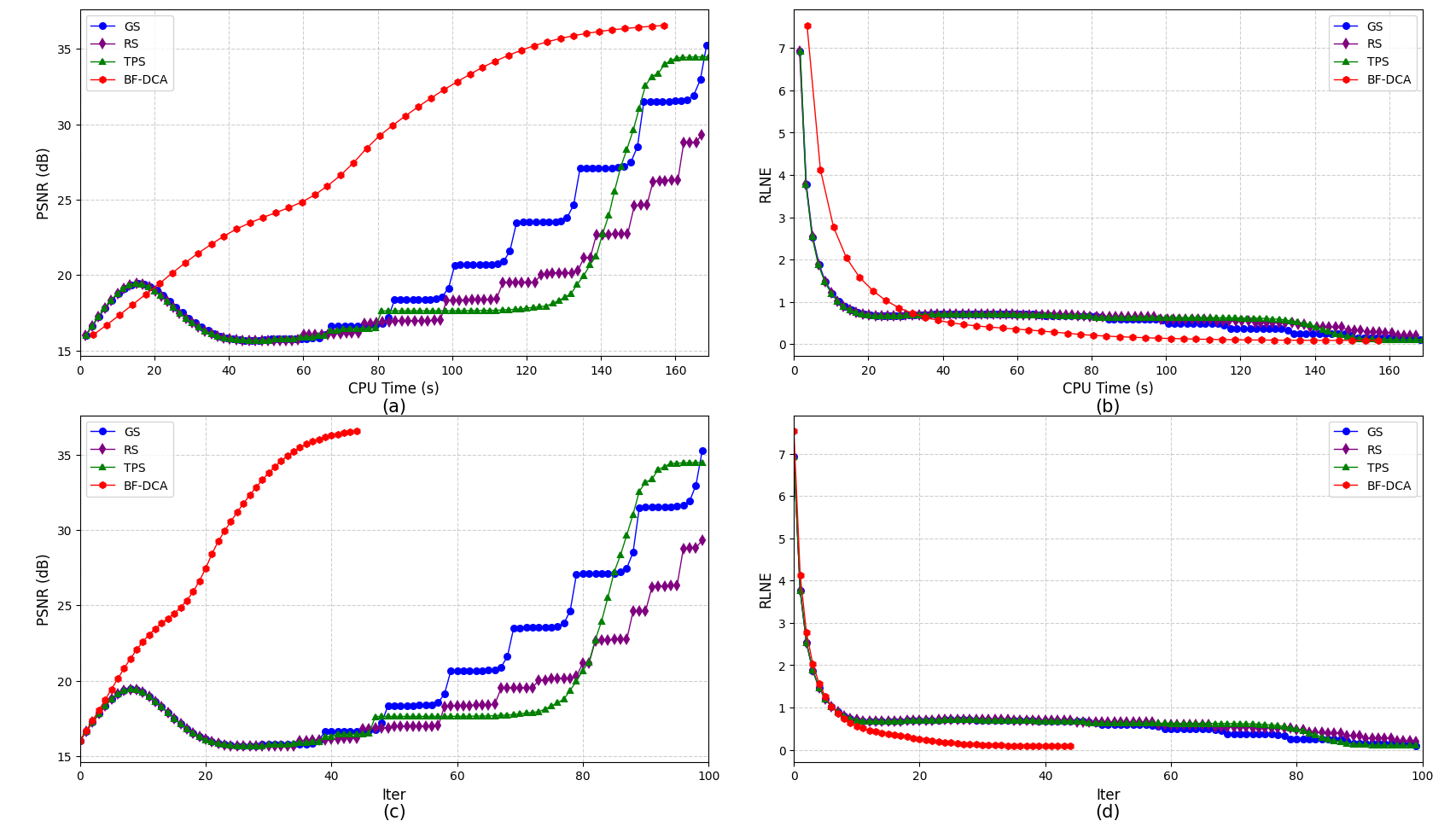}
	\vspace{0.1cm}
	\caption{Convergence behavior of all the tested algorithms on a brain magnetic resonance image: PSNR value versus computing time (a) and iterations (c); RLNE values versus computing time (b) and  iterations (d).}
	\label{fig:graph4}
\end{figure}

\begin{table}[htbp]
	\centering
	\caption{Detailed results of each algorithm on brain image and Shepp-Logan image.}
	\label{tb1}
	\small
	\begin{tabular}{llrrr}
		\hline
		{Dataset}&{Method} & {Time (s)} & {RLNE} & {PSNR (dB)} \\ \hline
		& GS & 168.366 & 0.101 & 35.251 \\
		Brain Image (128$\times$128)& RS & 167.063 & 0.199 & 29.322  \\
		& TPE & 168.873 & 0.110 & 34.469 \\
		&  {BF-DCA} & \textbf{156.976} & \textbf{0.086} & \textbf{36.554} \\\hline
		& GS & 117.541 & 0.061 & 36.466 \\
		Shepp-Logan Image (64$\times$64)& RS & 119.418 & 0.061 & 36.472 \\
		& TPE & 125.379 & 0.062 & 36.311 \\
		& {BF-DCA} & \textbf{80.425} & \textbf{0.054} & \textbf{37.629} \\\hline
	\end{tabular}
\end{table}

\subsection{Test on multiple images}

In this subsection, we use some magnetic resonance brain gray images to further illustrate the superiority of BF-DCA. As previous test, we also employ a pseudo-radial sampling scheme with a sampling rate of $57\%$. Furthermore, we also added Gaussian noise with a noise level of $0.001$.
In this experiment, for each dataset, we randomly select $100$ images of size $64 \times 64$ to form the dataset. Among them, $10$ images are used for training, $10$ images are used for validation, and $50$ images are used for testing. It is worth noting that all algorithms are evaluated on the same datasets.
For the GS, RS, and TPE algorithms, the hyperparameters  are selected based the interval $[-7,7]$, and all other parameter settings are kept the same as in the previous experiment.
For each image type, the experiments are repeated $10$ times under the same parameter settings.
For visibly see the see the images used in this test, we display $5$ brain images in Figure \ref{fig:graph5}.

\begin{figure}[htbp]
	\centering
	\includegraphics[width=1.0\textwidth]{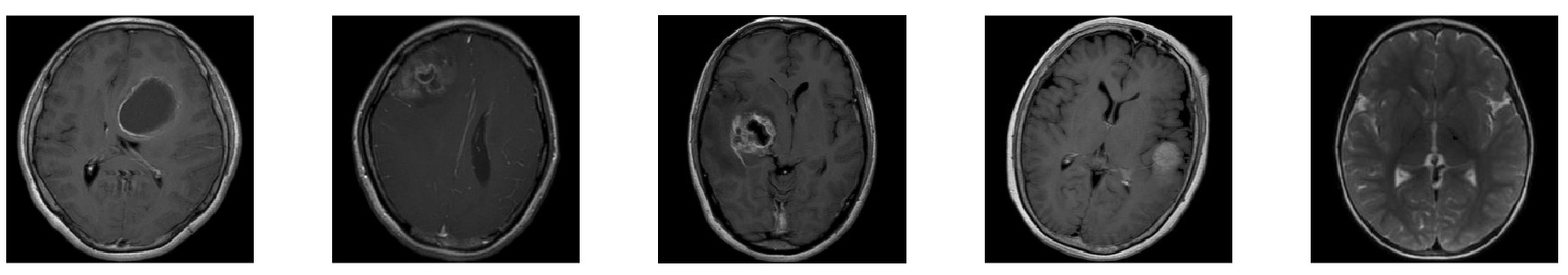} \\
	\vspace{-.5cm}
	\caption{Some brain gray images  used in this test.}
	\label{fig:graph5}
\end{figure}

For clearly distinct the performance of each algorithm, we report the detailed result in terms of  computing time (Time), and the validation error (Val.Err), and the testing error (Tes.Err) in Table \ref{tb3}.
From this table we see that,  the proposed BF-DCA algorithm consistently outperforms the competing methods. Specifically, BF-DCA achieves the shortest running time, reducing the computational cost by approximately $24\%$ compared with GS, RS, and TPE. Moreover, BF-DCA attains the lowest validation and test errors, demonstrating superior reconstruction accuracy and generalization performance. In contrast, GS and RS exhibit similar computational costs and error levels, while TPE shows relatively larger variability in validation and test errors. These results clearly indicate that BF-DCA provides a more efficient and accurate approach than the other methods under the same experimental settings.

\begin{table}[htbp]
	\centering
	\caption{Detailed results of algorithms on brain gray images.}
	\label{tb3}
	\begin{tabular}{lllll}
		\hline
		\textbf{Settings} & \textbf{Method} & \textbf{Time (s)} & \textbf{Val.Err.} & \textbf{Tes.Err.} \\ \hline
		$I_{\text{val}}=10$ & Grid Search & 173.994s \(\pm\) 2.685s & 0.563 \(\pm\) 0.019 & 0.557 \(\pm\) 0.008 \\
		$I_{\text{tes}}=10$ & Random Search & 174.293s \(\pm\)1.289s & 0.557 \(\pm\)0.018 &  0.552  \(\pm\) 0.007 \\
		$I_{\text{tra}}=50$ & TPE & 174.594s \(\pm\) 2.169s & 0.704 \(\pm\) 0.165 & 0.702 \(\pm\) 0.164 \\
		$I_{\text{Siz}}=64\times64$ & {BF-DCA} & \textbf{131.987s \(\pm\) 1.785s} & \textbf{0.441\(\pm\) 0.029} & \textbf{0.468 \(\pm\)0.017} \\
       	\hline
	\end{tabular}
\end{table}

%%%%%%%%%%%%%%%%%%%%%%%%%%%%%%%
\section{Conclusions}\label{sec5}
\setcounter{equation}{0}
%%%%%%%%%%%%%%%%%%%%%%%%%%%%%%%%%%%%%%%%%%%

In this paper, we investigated the automatic selection of hyperparameters for optimization-based image restoration models through a bilevel programming framework. By reformulating the original bilevel problem into an equivalent single-level problem with difference-of-convex inequality constraints, we developed an efficient algorithm that integrates ADMM for solving the lower-level problem with a feasibility penalty strategy and a proximal point scheme. The proposed method avoids the strong convexity assumptions commonly required by gradient-based bilevel approaches and is well suited for nonsmooth regularization models. We established global convergence of the generated sequence to a KKT stationary point of the equivalent problem. Extensive numerical experiments on both simulated and real images demonstrated that the proposed algorithm achieves superior restoration quality while requiring less computing time compared with several existing hyperparameter selection methods, such as, GS, RS, and TPE. These results indicate that the proposed framework provides a reliable and effective tool for automated hyperparameter selection in image restoration problems.

%\section*{Acknowledgements}
%We would like to thank the anonymous referees and the associate editor for their useful comments and suggestions
%which improved this paper greatly.

\section*{Disclosure statement}
The authors report there are no competing interests to declare.

\section*{Funding}

The work of Peili Li is supported by the National Natural Science Foundation of China (Grant No. 12301420), the Key Scientific Research Project of Universities in Henan Province (Grant No. 25A110002).
%The work of Yanyun Ding is supported by the Shenzhen Polytechnic University Research Fund (Grant No. 6024310021K).
The work of Qiuyu Wang is supported by the National Natural Science Foundation of China (Grant No. 12471307).

\bibliography{references}  %�ο����׿�������Ref

@article{lustig2007sparse,
	title={Sparse {MRI}: The application of compressed sensing for rapid MR imaging},
	author={Lustig, Michael and Donoho, David and Pauly, John M},
	journal={Magnetic Resonance in Medicine: An Official Journal of the International Society for Magnetic Resonance in Medicine},
	volume={58},
	number={6},
	pages={1182--1195},
	year={2007},
	publisher={Wiley Online Library}
}

@article{rudin1992nonlinear,
	title={Nonlinear total variation based noise removal algorithms},
	author={Rudin, Leonid I and Osher, Stanley and Fatemi, Emad},
	journal={Physica D: nonlinear phenomena},
	volume={60},
	number={1-4},
	pages={259--268},
	year={1992},
	publisher={Elsevier}
}

@inproceedings{ma2008efficient,
	title={An efficient algorithm for compressed {MR} imaging using total variation and wavelets},
	author={Ma, Shiqian and Yin, Wotao and Zhang, Yin and Chakraborty, Amit},
	booktitle={2008 IEEE Conference on Computer Vision and Pattern Recognition},
	pages={1--8},
	year={2008},
	organization={IEEE}
}

@article{yang2010fast,
	title={A fast alternating direction method for {TVL1-L2} signal reconstruction from partial Fourier data},
	author={Yang, Junfeng and Zhang, Yin and Yin, Wotao},
	journal={IEEE Journal of Selected Topics in Signal Processing},
	volume={4},
	number={2},
	pages={288--297},
	year={2010},
	publisher={IEEE}
}

@article{li2017two,
	title={Two-step fixed-point proximity algorithms for multi-block separable convex problems},
	author={Li, Qia and Xu, Yuesheng and Zhang, Na},
	journal={Journal of Scientific Computing},
	volume={70},
	number={3},
	pages={1204--1228},
	year={2017},
	publisher={Springer}
}

@article{ding2023efficient,
	title={Efficient dual ADMMs for sparse compressive sensing MRI reconstruction},
	author={Ding, Yanyun and Li, Peili and Xiao, Yunhai and Zhang, Haibin},
	journal={Mathematical Methods of Operations Research},
	volume={97},
	number={2},
	pages={207--231},
	year={2023},
	publisher={Springer}
}

@article{kunisch2013bilevel,
	title={A bilevel optimization approach for parameter learning in variational models},
	author={Kunisch, Karl and Pock, Thomas},
	journal={SIAM Journal on Imaging Sciences},
	volume={6},
	number={2},
	pages={938--983},
	year={2013},
	publisher={SIAM}
}

@incollection{dempe2020bilevel,
	title={Bilevel optimization: theory, algorithms, applications and a bibliography},
	author={Dempe, Stephan},
	booktitle={Bilevel optimization: advances and next challenges},
	pages={581--672},
	year={2020},
	publisher={Springer}
}

@inproceedings{franceschi2018bilevel,
	title={Bilevel programming for hyperparameter optimization and meta-learning},
	author={Franceschi, Luca and Frasconi, Paolo and Salzo, Saverio and Grazzi, Riccardo and Pontil, Massimiliano},
	booktitle={International conference on machine learning},
	pages={1568--1577},
	year={2018},
	organization={PMLR}
}

@article{feng2018gradient,
	title={Gradient-based regularization parameter selection for problems with nonsmooth penalty functions},
	author={Feng, Jean and Simon, Noah},
	journal={Journal of Computational and Graphical Statistics},
	volume={27},
	number={2},
	pages={426--435},
	year={2018},
	publisher={Taylor \& Francis}
}

@article{bertrand2022implicit,
	title={Implicit differentiation for fast hyperparameter selection in non-smooth convex learning},
	author={Bertrand, Quentin and Klopfenstein, Quentin and Massias, Mathurin and Blondel, Mathieu and Vaiter, Samuel and Gramfort, Alexandre and Salmon, Joseph},
	journal={Journal of Machine Learning Research},
	volume={23},
	number={149},
	pages={1--43},
	year={2022}
}

@inproceedings{shaban2019truncated,
	title={Truncated back-propagation for bilevel optimization},
	author={Shaban, Amirreza and Cheng, Ching-An and Hatch, Nathan and Boots, Byron},
	booktitle={The 22nd international conference on artificial intelligence and statistics},
	pages={1723--1732},
	year={2019},
	organization={PMLR}
}

@inproceedings{liu2018darts,
	author    = {Liu, Hanxiao and Simonyan, Karen and Yang, Yiming},
	title     = {{DARTS}: Differentiable Architecture Search},
	booktitle = {International Conference on Learning Representations},
	year      = {2019}
}

@article{ji2020convergence,
	title={Convergence of meta-learning with task-specific adaptation over partial parameters},
	author={Ji, Kaiyi and Lee, Jason D and Liang, Yingbin and Poor, H Vincent},
	journal={Advances in Neural Information Processing Systems},
	volume={33},
	pages={11490--11500},
	year={2020}
}

@inproceedings{ji2021bilevel,
	title={Bilevel optimization: Convergence analysis and enhanced design},
	author={Ji, Kaiyi and Yang, Junjie and Liang, Yingbin},
	booktitle={International conference on machine learning},
	pages={4882--4892},
	year={2021},
	organization={PMLR}
}

@inproceedings{raghu2019rapid,
	author    = {Raghu, Aniruddh and Raghu, Maithra and Bengio, Samy and Vinyals, Oriol},
	title     = {Rapid Learning or Feature Reuse? Towards Understanding the Effectiveness of {MAML}},
	booktitle = {International Conference on Learning Representations},
	year      = {2020}
}

@inproceedings{bertrand2020implicit,
	title={Implicit differentiation of lasso-type models for hyperparameter optimization},
	author={Bertrand, Quentin and Klopfenstein, Quentin and Blondel, Mathieu and Vaiter, Samuel and Gramfort, Alexandre and Salmon, Joseph},
	booktitle={International Conference on Machine Learning},
	pages={810--821},
	year={2020}
}

@inproceedings{grazzi2020iteration,
	title={On the iteration complexity of hypergradient computation},
	author={Grazzi, Riccardo and Franceschi, Luca and Pontil, Massimiliano and Salzo, Saverio},
	booktitle={International Conference on Machine Learning},
	pages={3748--3758},
	year={2020}
}

@inproceedings{gao2022value,
	title={Value function based difference-of-convex algorithm for bilevel hyperparameter selection problems},
	author={Gao, Lucy L and Ye, Jane and Yin, Haian and Zeng, Shangzhi and Zhang, Jin},
	booktitle={International conference on machine learning},
	pages={7164--7182},
	year={2022},
	organization={PMLR}
}

@article{ye2023difference,
	title={Difference of convex algorithms for bilevel programs with applications in hyperparameter selection},
	author={Ye, Jane J and Yuan, Xiaoming and Zeng, Shangzhi and Zhang, Jin},
	journal={Mathematical Programming},
	volume={198},
	number={2},
	pages={1583--1616},
	year={2023},
	publisher={Springer}
}

@article{2019A,
	title={A block symmetric {Gauss---Seidel} decomposition theorem for convex composite quadratic programming and its applications},
	author={ Li, Xudong  and  Sun, Defeng  and  Toh, Kim Chuan },
	journal={Mathematical Programming: Series A and B},
	volume={175},
	pages={395-418},
	year={2019}
}

@article{nesterov2014subgradient,
	title={Subgradient methods for huge-scale optimization problems},
	author={Nesterov, Yu},
	journal={Mathematical Programming},
	volume={146},
	number={1},
	pages={275--297},
	year={2014},
	publisher={Springer}
}

@article{attouch2013convergence,
	title={Convergence of descent methods for semi-algebraic and tame problems: proximal algorithms, forward--backward splitting, and regularized Gauss--Seidel methods},
	author={Attouch, Hedy and Bolte, J{\'e}r{\^o}me and Svaiter, Benar Fux},
	journal={Mathematical programming},
	volume={137},
	number={1},
	pages={91--129},
	year={2013},
	publisher={Springer}
}

@article{attouch2010proximal,
	title={Proximal alternating minimization and projection methods for nonconvex problems: An approach based on the Kurdyka-{\L}ojasiewicz inequality},
	author={Attouch, H{\'e}dy and Bolte, J{\'e}r{\^o}me and Redont, Patrick and Soubeyran, Antoine},
	journal={Mathematics of operations research},
	volume={35},
	number={2},
	pages={438--457},
	year={2010},
	publisher={INFORMS}
}

@article{liu2019refined,
	title={A refined convergence analysis of {pDCAe} with applications to simultaneous sparse recovery and outlier detection},
	author={Liu, Tianxiang and Pong, Ting Kei and Takeda, Akiko},
	journal={Computational Optimization and Applications},
	volume={73},
	number={1},
	pages={69--100},
	year={2019},
	publisher={Springer}
}

@book{clarke1990optimization,
	title={Optimization and nonsmooth analysis},
	author={Clarke, Frank H},
	year={1990},
	publisher={SIAM}
}

@book{rockafellar2015convex,
author    = {Rockafellar, R. Tyrrell},
title     = {Convex Analysis},
publisher = {Princeton University Press},
address   = {Princeton, NJ},
year      = {1970}
}

@inprocedings{B2013,
    title={Making a science of model search: Hyperparameter optimization in hundreds of dimensions for vision architectures},
    author={Bergstra, James and Yamins, Daniel and Cox, David},
    booktitle={International conference on machine learning},
    pages={115--123},
    year={2013},
    organization={PMLR}
}
\bibliographystyle{abbrv}      % 名字缩写

\end{document}